\newtheorem{theorem}{\textsf{Theorem}}
\newtheorem{lemma}{\textsf{Lemma}}
\newenvironment{proof}[1][\textsf{Proof. }]{\textbf{#1}}{$\square$}
\def\text{\hbox} 
\def\tr{{\rm tr\,}}
\begin{document}

\title{
\date{}
{
\large \textsf{\textbf{Examples of rigid and flexible Seifert fibred cone-manifolds}}
}
}
\author{\small Alexander Kolpakov\footnote{Supported by the Schweizerischer Nationalfonds SNF no.~200020-121506/1 and no.~200021-131967/1}}
\maketitle

\begin{abstract}\noindent
The present paper gives an example of a rigid spherical cone-manifold and that of a flexible one which are both Seifert fibred.

\medskip
\textsf{\textbf{Keywords}} cone-manifold, rigidity, flexibility, Seifert fibration

\textsf{\textbf{MSC (2010)}} 53A35, 57R18, 57M25
\end{abstract}

\parindent=0pt
\section{Introduction}

The theory of three-dimensional orbifolds and cone-manifolds  attracts attention of many mathematicians since the original work of Thurston \cite{Thurston}. An introduction to the theory of orbifolds could be found in \cite[Ch.13]{Thurston}. For a basic introduction to the geometry of three-dimensional cone-manifolds and cone-surfaces we refer the reader to \cite{CooperHodgsonKerckhoff}. The main motivation for studying three-dimensional cone-manifolds comes from Thurston's approach to geometrization of three-orbifolds: three-dimensional cone-manifolds provide a way to deform geometric orbifold structures. The orbifold theorem has been proven in full generality by M.~Boileau, B.~Leeb and J.~Porti, see \cite{BoileauLeebPorti2001, BoileauLeebPorti2005}.

One of the main questions in the theory of three-dimensional cone-manifolds is the rigidity problem. First, the rigidity property was discovered for hyperbolic manifolds (so-called Mostow-Prasad rigidity, see \cite{Mostow, Prasad}). After that, the global rigidity property for hyperbolic three-dimensional cone-manifolds with singular locus a link and cone angles less than $\pi$ was proven by S. Kojima \cite{Kojima}. The key result that implies global rigidity is due to Hodgson and Kerckhoff \cite{HK98}, who showed the local rigidity of hyperbolic cone manifolds with singularity of link or knot type and cone angles less than $2\pi$. The de~Rham rigidity for spherical orbifolds was established in \cite{deRham, Rothenberg}. Detailed analysis of the rigidity property for three-dimensional cone-manifolds was carried out in \cite{Weiss2005, Weiss2007} for hyperbolic and spherical cone-manifolds with singularity a trivalent graph and cone angles less than $\pi$. 

Recently, the local rigidity for hyperbolic cone-manifolds with cone angles less than $2\pi$ was proven in \cite{M2009, Weiss2009}. However, examples of infinitesimally flexible hyperbolic cone-manifolds had already been given in \cite{Casson}. For other examples of flexible cone-manifolds one may refer to \cite{Izmestiev2009, Porti2002, Schlenker}.

The theorem of \cite{Weiss2007} concerning the global rigidity for spherical three-dimensional cone-manifolds was proven under the condition of being \emph{not Seifert fibred}. Recall that due to \cite{Porti2004} a cone-manifold is \emph{Seifert fibred} if its underlying space carries a Seifert fibration such that components of the singular stratum are leafs of the fibration. In particular, if its singular stratum is represented by a link, then the complement is a Seifert fibred three-manifold. All Seifert fibred link complements in the three-sphere are described by \cite{BurdeMurasugi}. In the present paper, we give an explicit example of a rigid spherical cone-manifold and a flexible one which are both Seifert fibred. The singular locus for each of these cone-manifolds is a link and the underlying space is the three-sphere $\mathbb{S}^3$. The rigid cone-manifold given in the paper has cone-angles of both kinds, less or greater than $\pi$. The flexible one has cone-angles strictly greater than $\pi$. Deformation of its geometric structure comes essentially from those of the base cone-surface. However, hyperbolic orbifolds, which are Seifert fibred over a disc, are rigid. Their geometric structure degenerates to the minimal-perimeter hyperbolic polygon, as shown in \cite{Porti2010}. These are uniquely determined by cone angles.

The paper is organised as follows: first, we recall some common facts concerning spherical geometry. In the second section, the geometry of the Hopf fibration is considered and a number of lemmas are proven. After that, we construct two explicit examples of Seifert fibred cone-manifolds. The first one is a globally rigid cone-manifold and its moduli space is parametrised by its cone angles only. The second one is a flexible Seifert fibred cone-manifold. This means that we can deform its metric while keeping its cone angles fixed. Rigorously speaking, the following assertion is proven: the given cone-manifold has a one-parameter family of distinct spherical cone metrics with the same cone angles.

\bigskip
{\textbf{Acknowledgement}. The author is grateful to Prof. J. Porti (Universitat Aut\`{o}noma de Barcelona) and Prof. J.-M. Schlenker (Institut de Math\'{e}matiques de Toulouse) for their valuable comments on the paper and discussion of the subject.}
\section{Spherical geometry}

Below we present several common facts concerning spherical geometry in dimension two and three.

Let us identify a point $p=(w,x,y,z)$ of the three-dimensional sphere
\begin{equation*}
\mathbb{S}^3 = \{(w, x, y, z) \in \mathbb{R}^4 | w^2+x^2+y^2+z^2 = 1 \}
\end{equation*}
with an $SU_2(\mathbb{C})$ matrix of the form
\begin{equation*}
P = \left(
      \begin{array}{cc}
        w + i x & y + i z \\
        -y + i z & w - i x \\
      \end{array}
    \right).
\end{equation*}

Then, replace the group ${\rm Isom}^+\, \mathbb{S}^3 \cong SO_4(\mathbb{R})$ of orientation preserving isometries with its two-fold covering $SU_2(\mathbb{C})\times SU_2(\mathbb{C})$. Finally, define the action of $\langle A, B \rangle \in SU_2(\mathbb{C})\times SU_2(\mathbb{C})$ on $P \in SU_2(\mathbb{C})$ by
\begin{equation*}
\langle A, B \rangle : P \longmapsto A^t P \overline{B}.
\end{equation*}
Thus, we define the action of $SO_4(\mathbb{R}) \cong SU_2(\mathbb{C})\times SU_2(\mathbb{C})\slash\{\pm {\rm id}\}$ on the three-sphere~$\mathbb{S}^3$.

By assuming $w=0$, we obtain the two-dimensional sphere
$$\mathbb{S}^2 = \{ (x, y, z) \in \mathbb{R}^3 | x^2+y^2+z^2 = 1 \}.$$
Let us identify a point $(x, y, z)$ of $\mathbb{S}^2$ with the matrix
\begin{equation*}
Q = \left(
      \begin{array}{cc}
        i x & y + iz \\
        -y + iz & - i x \\
      \end{array}
    \right),
\end{equation*}
which represents a pure imaginary unit quaternion $Q \in \mathbf{H}$.

Instead of ${\rm Isom}^+\,\mathbb{S}^2 \cong SO_3(\mathbb{R})$ we use its two-fold covering $SU_2(\mathbb{C})$ acting by
\begin{equation*}
A : q \longmapsto A^t q \overline{A}
\end{equation*}
for every $A \in SU_2(\mathbb{C})$ and every $q \in \mathbb{S}^2$.

Equip each $\mathbb{S}^3$ and $\mathbb{S}^2$ with an intrinsic metric of constant sectional curvature $+1$. We call the distance between two points $P$ and $Q$ of $\mathbb{S}^n$ ($n = 2,3$) a real number $d(P,Q)$ uniquely defined by the conditions
\begin{equation*}
0 \leq d(P,Q) \leq \pi,
\end{equation*}
\begin{equation*}
\cos d(P,Q) = \frac{1}{2}\,{\rm tr}\,P^t\overline{Q}.
\end{equation*}

The next step is to describe spherical geodesic lines in $\mathbb{S}^n$. Let us recall the following theorem \cite[Theorem 2.1.5]{Ratcliffe}.

\begin{theorem}
A function $\lambda : \mathbb{R} \rightarrow \mathbb{S}^n$ is a geodesic line if and only if there are orthogonal vectors $x$, $y$ in $\mathbb{S}^n$ such that
\begin{equation*}
\lambda(t) = (\cos t) x + (\sin t) y.
\end{equation*}
\end{theorem}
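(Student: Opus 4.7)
The plan is to prove both directions by reducing the geodesic condition to the simple ambient ODE $\lambda'' = -\lambda$, which makes the explicit formula immediate.

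For the easy direction ($\Leftarrow$), I would fix orthonormal vectors $x, y \in \mathbb{R}^{n+1}$ and set $\lambda(t) = (\cos t) x + (\sin t) y$. A direct computation using $|x|=|y|=1$ and $x \cdot y = 0$ shows $|\lambda(t)|^2 \equiv 1$, so $\lambda$ maps into $\mathbb{S}^n$, and $|\lambda'(t)|^2 = \sin^2 t + \cos^2 t = 1$, so $\lambda$ is parametrized by arclength. Finally $\lambda''(t) = -\lambda(t)$, so the acceleration is everywhere normal to $\mathbb{S}^n$; the intrinsic covariant derivative of $\lambda'$ therefore vanishes and $\lambda$ is a geodesic.

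For the converse ($\Rightarrow$), suppose $\lambda : \mathbb{R} \to \mathbb{S}^n$ is a geodesic, which I take to mean parametrized by arclength with vanishing intrinsic acceleration. Differentiating $\lambda \cdot \lambda \equiv 1$ gives $\lambda \cdot \lambda' \equiv 0$, and differentiating again together with $|\lambda'|^2 \equiv 1$ yields $\lambda \cdot \lambda'' \equiv -1$. Since the geodesic condition forces $\lambda''(t)$ to be proportional to the outward normal $\lambda(t)$, these two facts together give the ambient ODE $\lambda''(t) = -\lambda(t)$. Set $x := \lambda(0)$ and $y := \lambda'(0)$; then $x,y \in \mathbb{S}^n$ are orthogonal by the relations above. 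The curve $\mu(t) := (\cos t) x + (\sin t) y$ satisfies the same second-order linear ODE with the same initial data $\mu(0)=x$, $\mu'(0)=y$, so uniqueness of solutions gives $\lambda \equiv \mu$.

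The only subtle point is the step asserting that $\lambda''$ is proportional to $\lambda$: this uses the fact that for a unit sphere in Euclidean space, the second fundamental form is simply the identity, so the intrinsic acceleration of an arclength-parametrized curve vanishes exactly when its ambient acceleration points along the position vector. Everything else is direct verification or an appeal to uniqueness for linear ODEs, so no further technicalities arise.
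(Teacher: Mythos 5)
Your argument is correct, but it is worth noting that the paper does not prove this statement at all: it is quoted verbatim from Ratcliffe (Theorem 2.1.5 of \cite{Ratcliffe}), where the proof is metric-geometric rather than Riemannian. In Ratcliffe's framework a \emph{geodesic line} is by definition a locally distance-preserving map $\mathbb{R}\rightarrow\mathbb{S}^n$, and the characterization is obtained by analysing the equality case of the spherical triangle inequality to show that geodesic arcs are exactly minor arcs of great circles; no differentiability is assumed. Your route instead takes ``geodesic'' to mean an arclength-parametrized curve with vanishing covariant acceleration, reduces this via the Gauss formula to the ambient ODE $\lambda''=-\lambda$, and concludes by uniqueness for linear ODEs. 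Both directions of your argument are sound under that definition (the computations $\lambda\cdot\lambda'\equiv 0$, $\lambda\cdot\lambda''\equiv -1$, and the normality of $\lambda''$ do force $\lambda''=-\lambda$), and the ODE proof is arguably cleaner and more self-contained. The one bridge you leave implicit is the equivalence of the two notions of geodesic: to match the statement as used in this paper (and in the metric context of cone-manifolds) you would need to add that a locally distance-preserving map into a round sphere is automatically smooth and unit-speed with vanishing intrinsic acceleration, or conversely that your solutions $(\cos t)x+(\sin t)y$ are locally distance-preserving, which follows from the formula $\cos d(\lambda(s),\lambda(t))=\cos(s-t)$. With that one sentence added, your proof is a complete and legitimate alternative to the cited one.
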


Taking into account the preceding discussion, we may reformulate the statement above.

\begin{lemma}
Every geodesic line (a great circle) in $\mathbb{S}^3$ (respectively, $\mathbb{S}^2$) could be represented in the form
\begin{equation*}
C(t) = P \cos t + Q \sin t,
\end{equation*}
where $P, Q \in SU_2(\mathbb{C})$ (respectively $P, Q \in \mathbf{H}$) satisfy orthogonality condition
\begin{equation*}
\cos\,d(P,Q) = 0.
\end{equation*}
\end{lemma}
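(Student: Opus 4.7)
The plan is to derive the lemma as a direct translation of Ratcliffe's Theorem 2.1.5, quoted above, into the matrix (respectively quaternionic) language introduced earlier in this section. The core point is that the identifications $\mathbb{S}^3\hookrightarrow SU_2(\mathbb{C})$ and $\mathbb{S}^2\hookrightarrow\mathbf{H}$ are restrictions of $\mathbb{R}$-linear injections $\mathbb{R}^4\to M_2(\mathbb{C})$ and $\mathbb{R}^3\to M_2(\mathbb{C})$. Hence, if $x,y$ are the vectors given by Ratcliffe's theorem and $P,Q$ are the matrices that correspond to them, then the geodesic $\lambda(t)=(\cos t)x+(\sin t)y$ is mapped precisely to $C(t)=P\cos t+Q\sin t$, and conversely any curve of this form pulls back to the corresponding vector expression.

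What remains is to match the two notions of orthogonality. I would show that the Hermitian-trace pairing used to define distance,
\begin{equation*}
(P,Q)\longmapsto \tfrac{1}{2}\,\tr\,P^t\overline{Q},
\end{equation*}
agrees on the image of the identification with the standard Euclidean inner product of the underlying vectors in $\mathbb{R}^n$ ($n=3,4$). This is a direct matrix computation: writing $P$ and $Q$ out in the coordinates $(w,x,y,z)$, $(w',x',y',z')$, multiplying $P^t$ by $\overline{Q}$, and reading off the two diagonal entries, each diagonal entry equals $ww'+xx'+yy'+zz'$, so the trace is twice the Euclidean inner product. The same calculation, restricted to $w=w'=0$, handles the case of pure imaginary unit quaternions on $\mathbb{S}^2$. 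Consequently, the orthogonality condition $\langle x,y\rangle=0$ of Ratcliffe's theorem is equivalent to $\cos d(P,Q)=0$, which establishes the lemma.

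The main obstacle is essentially this bookkeeping identity between the trace pairing and the Euclidean inner product; once it is in place the lemma is immediate. I would also briefly note, for completeness, that $P,Q\in SU_2(\mathbb{C})$ (resp.\ $\mathbf{H}$) together with the orthogonality relation guarantee that $C(t)$ stays on the unit sphere for all $t\in\mathbb{R}$, so $C(t)$ is indeed a great circle and not merely a curve in the ambient matrix space; this follows at once from $\tfrac{1}{2}\tr\,C(t)^t\overline{C(t)}=\cos^2 t+\sin^2 t=1$.
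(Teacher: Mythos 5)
Your proposal is correct and takes essentially the same route as the paper, which offers no separate argument but presents the lemma as a direct reformulation of Ratcliffe's Theorem 2.1.5 under the linear identifications $\mathbb{R}^4\to SU_2(\mathbb{C})$ and $\mathbb{R}^3\to\mathbf{H}$, exactly as you spell out. One small correction to your bookkeeping: the diagonal entries of $P^t\overline{Q}$ are $ww'+xx'+yy'+zz'$ only up to a purely imaginary term (the two entries are complex conjugates of each other), but these imaginary parts cancel in the trace, so the identity $\tfrac{1}{2}\tr P^t\overline{Q}=\langle p,q\rangle$ and hence the lemma still follow.
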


By virtue of this lemma, one may regard $P$ as \textit{the starting point} of the curve $C(t)$ and $Q$ as \textit{the velocity vector} at $P$, since $C(0) = P$, $\dot C(0) = \frac{d}{dt}\,C(t)|_{t=0} = Q$ and $d(C(0),\dot{C}(0)) = \frac{\pi}{2}$ (the latter holds up to a change of the parameter sign).

Given two geodesic lines $C_1(t)$ and $C_2(t)$, define their common perpendicular $C_{12}(t)$ as a geodesic line such that there exist $0 \leq t_1, t_2 \leq 2\pi$, $0 \leq \delta \leq \pi$ with the following properties:
\begin{equation*}
C_{12}(0) = C_1(t_1),\,C_{12}(\delta) = C_2(t_2),
\end{equation*}
\begin{equation*}
d(\dot{C}_{12}(0), \dot{C}_1(t_1)) = d(\dot{C}_{12}(\delta), \dot{C}_2(t_2))=\frac{\pi}{2}.
\end{equation*}

We call $\delta$ the distance between the geodesics $C_1(t)$ and $C_2(t)$. Note, that for an arbitrary pair of geodesics their common perpendicular should not be unique.

For an additional explanation of spherical geometry we refer the reader to \cite{Ratcliffe} and \cite[Chapter~6.4.2]{Weiss2005}.
\section{Links arising from the Hopf fibration}

The present section is devoted to the construction of a family of links $\mathcal{H}_n$ ($n\geq2$) which we shall use later. These links have a nice property -- each of them is formed by $n\geq2$ fibres of the Hopf fibration. Recall that the Hopf map $h : \mathbb{S}^3 \xrightarrow{\mathbb{S}^1} \mathbb{S}^2$ has geometric nature \cite[p.~654]{Hopf}. Our aim is to prove a number of lemmas concerning the geometry of the Hopf fibration in more detail.
\subsection{Links $\mathcal{H}_n$ as fibres of the Hopf fibration}

The Hopf map $h$ is defined as follows \cite{Hopf}: for every point $(w, x, y, z) \in \mathbb{S}^3$ let its image on $\mathbb{S}^2$ be
\begin{equation*}
h(w,x,y,z) = \left(2(xz+wy), 2(yz-wx), 1-2(x^2+y^2)\right).
\end{equation*}

The fibre $h^{-1}(a,b,c)$ over the point $(a,b,c) \in \mathbb{S}^2$ is a geodesic line in $\mathbb{S}^3$ of the form
\begin{equation*}
C(t) = \frac{1}{\sqrt{2(1+c)}}\left(\left(1+c, - b, a, 0\right)\,\cos t+
(0, a, b, 1+c)\,\sin t\right).
\end{equation*}

The exceptional point $(0,0,-1)$ has the fibre $(0,\cos t,- \sin t,0)$.

The line $C(t)$ is a great circle of $\mathbb{S}^3$ and can be rewritten in the matrix form
\begin{equation*}
C(t) = P(a,b,c) \cos t + Q(a,b,c) \sin t,
\end{equation*}
where
\begin{equation*}
P(a,b,c) = \frac{1}{\sqrt{2(1+c)}}\,\left(\begin{array}{cc}(1+c) - ib & a \\-a & (1+c) + ib \\\end{array}\right),
\end{equation*}
\begin{equation*}
Q(a,b,c) = P(a,b,c)\,\left(\begin{array}{cc}0 & i \\i & 0 \\\end{array}\right).
\end{equation*}

We call
\begin{equation*}
F(t) = \left(\begin{array}{cc}1 & 0 \\0 & 1 \\\end{array}\right) \cos t + \left(\begin{array}{cc}0 & i \\i & 0 \\\end{array}\right) \sin t
\end{equation*}
\textit{the generic fibre} $h^{-1}(0, 0, 1)$. Moreover, every fibre $h^{-1}(a, b, c)$ can be described as a circle $C(t) = P(a,b,c)\,F(t)$. Note, that
$P(a,b,c)$ is an $SU_2(\mathbb{C})$ matrix. Thus $C(t)$ could be obtained from $F(t)$ by means of the isometry $\langle P(a,b,c)^t, {\rm id}\rangle$. For the exceptional point $(0, 0, -1) \in \mathbb{S}^2$, we set
\begin{equation*}
P(0,0,-1) = \left(\begin{array}{cc}0 & 1 \\-1 & 0 \\\end{array}\right).
\end{equation*}

It is known, that every pair of distinct fibres of the Hopf fibration represents simply linked circles in $\mathbb{S}^3$ (the Hopf link). Thus, $n$ fibres form a link $\mathcal{H}_n$ whose every two components form the Hopf link. One can obtain it by drawing $n$ straight vertical lines on a cylinder and identifying its ends by a rotation through the angle of $2\pi$. Hence $\mathcal{H}_n$ is an $(n, n)$ torus link.

\begin{figure}[ht]
\begin{center}
\includegraphics* [totalheight=4cm]{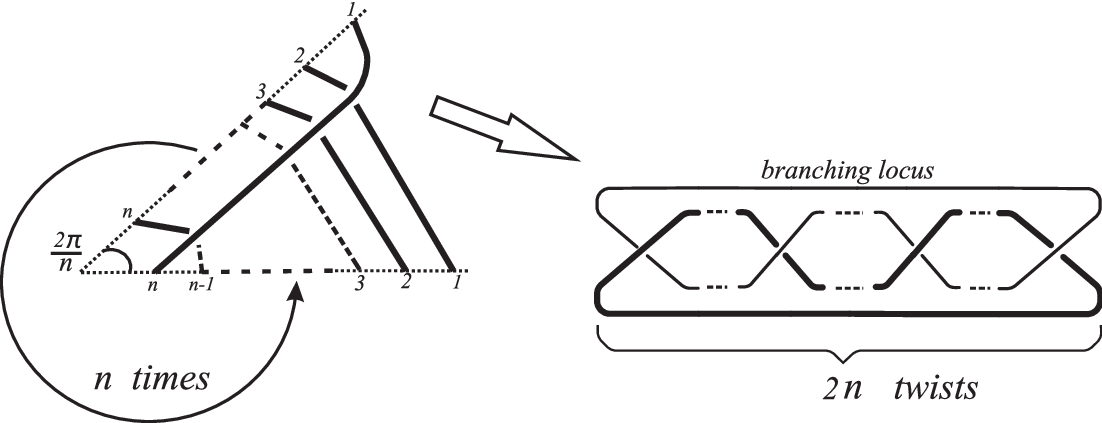}
\end{center}
\caption{$n$-fold branched covering of $(2,2n)$ torus link by $\mathcal{H}_n$} \label{nfold}
\end{figure}

Another remark is that the $\mathcal{H}_n$ link could be arranged around a point in order to reveal its $n$-th order symmetry, as depicted in Fig.~\ref{nfold}. This fact allows us to consider $n$-fold branched coverings of the corresponding cone-manifolds with singular locus $\mathcal{H}_n$ that appear in Section~$4$.
\subsection{Geometry of the Hopf fibration}

Here and below we use the polar coordinate system $(\psi, \theta)$ on $\mathbb{S}^2$ instead of the Cartesian one. Suppose
\begin{equation*}
a = \cos \psi \sin \theta,\,\,b = \sin \psi \sin \theta,\,\,c = \cos \theta,
\end{equation*}
\begin{equation*}
0 \leq \psi \leq 2\pi,\,\, 0 \leq \theta \leq \pi
\end{equation*}
and let
\begin{equation*}
M(\psi,\theta) = P(a, b, c) = \left(\begin{array}{cc}
                                  \cos\frac{\theta}{2} - i \sin\psi \sin\frac{\theta}{2} & \cos\psi \sin\frac{\theta}{2} \\
                                - \cos\psi \sin\frac{\theta}{2} & \cos\frac{\theta}{2} + i \sin\psi \sin\frac{\theta}{2} \\
                                \end{array}\right).
\end{equation*}

A rotation of $\mathbb{S}^3$ about the generic fibre $F(t)$ through angle $\omega$ has the form $\langle R(\omega), R(\omega) \rangle$, where
\begin{equation*}
R(\omega) = \left(\begin{array}{cc}\cos\frac{\omega}{2} & i \sin\frac{\omega}{2} \\i \sin\frac{\omega}{2} & \cos\frac{\omega}{2} \\\end{array}\right).
\end{equation*}

The image of $F(t)$ under the Hopf map $h$ is $(0, 0)$ w.r.t. the polar coordinates. The following lemma shows how to obtain a rotation about the pre-image $h^{-1}(\psi, \theta)$ of an arbitrary point $(\psi, \theta)$.

\begin{lemma}\label{lemma:rotation}
A rotation through angle $\omega$ about an axis $C(t)$ in $\mathbb{S}^3$ which is the pre-image of a point $(\psi, \theta) \in \mathbb{S}^2$ with respect to the Hopf map is
\begin{equation*}
\langle \overline{M(\psi,\theta)}R(\omega)M(\psi,\theta)^t, R(\omega) \rangle.
\end{equation*}
\end{lemma}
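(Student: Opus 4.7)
The plan is to realise the required rotation as a conjugate of the known rotation about the generic fibre $F(t)$, transported by the isometry that sends $F$ to $C$.

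First, I would recall from the preceding discussion that the isometry
\[
T := \langle M(\psi,\theta)^{t},\mathrm{id}\rangle \in SU_2(\mathbb{C})\times SU_2(\mathbb{C})
\]
maps the generic fibre $F(t)$ onto the fibre $C(t) = h^{-1}(\psi,\theta)$, because the action rule $\langle A,B\rangle:P\mapsto A^{t}P\overline{B}$ applied to $T$ gives $F(t)\mapsto M(\psi,\theta)F(t) = C(t)$. Next I would note that the element $\rho_{F}(\omega):=\langle R(\omega),R(\omega)\rangle$ is, by assumption (stated just before the lemma), the rotation of $\mathbb{S}^{3}$ about $F(t)$ through angle $\omega$. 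Consequently the rotation $\rho_{C}(\omega)$ about $C(t)$ through angle $\omega$ is obtained by conjugation:
\[
\rho_{C}(\omega) \;=\; T\,\rho_{F}(\omega)\,T^{-1}.
\]

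It remains to compute this conjugate explicitly. For this I would first record the composition law induced by the action $\langle A,B\rangle:P\mapsto A^{t}P\overline{B}$, namely
\[
\langle A_{1},B_{1}\rangle\cdot\langle A_{2},B_{2}\rangle \;=\; \langle A_{2}A_{1},\,B_{1}B_{2}\rangle,
\]
which is straightforward to verify. From this rule one obtains $T^{-1}=\langle (M^{t})^{-1},\mathrm{id}\rangle = \langle \overline{M},\mathrm{id}\rangle$, using that $M\in SU_2(\mathbb{C})$ implies $(M^{t})^{-1}=\overline{M}$.

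Finally I would carry out the two multiplications. Applying the composition law,
\[
\langle M^{t},\mathrm{id}\rangle\cdot\langle R(\omega),R(\omega)\rangle \;=\; \langle R(\omega)M^{t},\,R(\omega)\rangle,
\]
and then
\[
\langle R(\omega)M^{t},R(\omega)\rangle\cdot\langle \overline{M},\mathrm{id}\rangle \;=\; \langle \overline{M}\,R(\omega)\,M^{t},\,R(\omega)\rangle,
\]
which is exactly the formula in the statement. The only step requiring attention is the order-reversal in the first slot of the composition law and the identification of $T^{-1}$ via the $SU_2(\mathbb{C})$ relation $M^{-1}=\overline{M}^{t}$; everything else is mechanical.
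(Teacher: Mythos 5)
Your proof is correct, and it reaches the statement by a mildly different route than the paper. The paper verifies directly that the displayed element fixes the axis pointwise, computing
\begin{equation*}
\left(\overline{M(\psi,\theta)}R(\omega)M(\psi,\theta)^t\right)^t C(t)\,\overline{R(\omega)} \;=\; M(\psi,\theta)\,R(\omega)^t F(t)\,\overline{R(\omega)} \;=\; C(t)
\end{equation*}
from $C(t)=M(\psi,\theta)F(t)$ and $\overline{M(\psi,\theta)^t}M(\psi,\theta)=\mathrm{id}$, whereas you \emph{derive} the element as the conjugate $T\,\langle R(\omega),R(\omega)\rangle\,T^{-1}$ of the generic-fibre rotation by the isometry $T=\langle M(\psi,\theta)^t,\mathrm{id}\rangle$ carrying $F(t)$ to $C(t)$. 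Both arguments rest on the same two inputs (the identity $C=MF$ and the form $\langle R(\omega),R(\omega)\rangle$ of the rotation about the generic fibre), and your composition law $\langle A_1,B_1\rangle\cdot\langle A_2,B_2\rangle=\langle A_2A_1,\,B_1B_2\rangle$, as well as the identification $T^{-1}=\langle\overline{M},\mathrm{id}\rangle$, are correct for the action $\langle A,B\rangle:P\mapsto A^tP\overline{B}$. Your packaging has one small advantage worth keeping: since conjugating a rotation by an isometry produces a rotation through the \emph{same} angle about the image axis, your argument also certifies that the angle of the resulting rotation is $\omega$, a point that the paper's axis-fixing computation leaves implicit.
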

\begin{proof}
Since we have that $C(t) = M(\psi, \theta) F(t)$ and $R(\omega)^tF(t)\overline{R(\omega)} = F(t)$ for every $0\leq t \leq 2\pi$, then
\begin{equation*}
\left(\overline{M(\psi,\theta)}R(\omega)M(\psi,\theta)^t\right)^tC(t)\overline{R(\omega)} = M(\psi, \theta) R(\omega)^tF(t)\overline{R(\omega)} =
\end{equation*}
\begin{equation*}
= M(\psi, \theta) F(t) = C(t)
\end{equation*}
by a straightforward computation.
Here we use the fact that $M(\psi, \theta) \in SU_2(\mathbb{C})$, and so $\overline{M(\psi,\theta)^t}M(\psi,\theta) = {\rm id}$.
\end{proof}

Another remarkable property of the Hopf fibration is discussed below.

\begin{lemma}\label{lemma:equidistant}
Every two fibres $C_1(t)$ and $C_2(t)$ of the Hopf fibration are equidistant geodesic lines (great circles) in $\mathbb{S}^3$.

If $C_i(t)$, $i\in\{1,2\}$ are pre-images of the points $\widehat{C}_i\in\mathbb{S}^2$, then the length $\delta$ of the common perpendicular for $C_1(t)$ and $C_2(t)$ equals $\frac{1}{2}d(\widehat{C}_1,\widehat{C}_2)$.
\end{lemma}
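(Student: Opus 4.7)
My plan is to reduce to a canonical configuration using an isometry of $\mathbb{S}^3$ that preserves the Hopf fibration, and then read the statement off from a short trace computation. Concretely, each Hopf fibre is a left coset $P\cdot F(\mathbb{R})$ of the one-parameter subgroup $F(\mathbb{R})\subset SU_2(\mathbb{C})$, so left multiplication by any $A\in SU_2(\mathbb{C})$ permutes fibres, and the corresponding isometry $\langle A,\mathrm{id}\rangle$ of $\mathbb{S}^3$ covers an isometry of the base $\mathbb{S}^2$ under the Hopf map $h$. Taking $A=\overline{M(\psi_1,\theta_1)}$ sends $C_1$ onto the generic fibre $F$ while preserving the base distance $d(\widehat C_1,\widehat C_2)$; after this reduction I may assume
\[C_1(t)=F(t),\qquad C_2(t)=M(\psi,\theta)F(t),\qquad \theta=d(\widehat C_1,\widehat C_2).\]

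\textbf{Computing the distance.} Abbreviating $M=M(\psi,\theta)$, and combining $F(s)^t=F(s)$, $\overline{F(t)}=F(-t)$, the group law $F(-t)F(s)=F(s-t)$, and cyclicity of trace, the spherical distance formula from Section 2 gives
\[\cos d(F(s),MF(t))=\tfrac{1}{2}\tr\!\bigl(F(s-t)\,\overline{M}\bigr),\]
which depends only on $s-t$. This already proves that $C_1$ and $C_2$ are equidistant. Expanding $F(u)=I\cos u+J\sin u$ with $J=\bigl(\begin{smallmatrix}0&i\\i&0\end{smallmatrix}\bigr)$, the trace splits as $\cos u\cdot\tfrac{1}{2}\tr\overline M+\sin u\cdot\tfrac{1}{2}\tr(J\overline M)$, and a direct inspection of the entries of $\overline M$ yields $\tfrac{1}{2}\tr\overline M=\cos(\theta/2)$ and $\tr(J\overline M)=0$. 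Hence $\cos d(C_1(s),C_2(t))=\cos(s-t)\cos(\theta/2)$, whose maximum in $t$ equals $\cos(\theta/2)$ at $t=s$, giving $\delta=\theta/2=\tfrac{1}{2}d(\widehat C_1,\widehat C_2)$.

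The only delicate step is the base-distance preservation used in the reduction; this rests on the Hopf map being a Riemannian submersion, which follows from the coset description of the fibres together with the bi-invariance of the round metric on $SU_2(\mathbb{C})$. All the other steps reduce to one-line trace calculations.
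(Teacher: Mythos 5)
Your proof is correct, but it takes a genuinely different route from the paper's. The paper disposes of the lemma in one line by citing Propositions~1.1 and~1.2 of \cite{GluckZiller}: the Hopf map is a Riemannian submersion onto the sphere of radius $\frac12$, whose fibres are equidistant, the distance between two fibres being the distance between their images in the base of curvature $+4$, i.e.\ half the distance measured on the unit sphere. You instead give an explicit computation: normalise by a fibration-preserving left translation so that $C_1=F$ and $C_2=MF$ with $M=M(\psi,\theta)$, and then the identity $\cos d(F(s),MF(t))=\tfrac12\tr\bigl(F(s-t)\overline M\bigr)=\cos(s-t)\cos(\theta/2)$ yields both equidistance (dependence on $s-t$ only) and $\delta=\theta/2=\tfrac12 d(\widehat C_1,\widehat C_2)$; I have checked the trace evaluations and they are right, with the tacit uses that $\cos(\theta/2)\ge 0$ because $0\le\theta\le\pi$ and that a minimising arc between the two circles is automatically perpendicular to both, so it is a common perpendicular in the paper's sense. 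What your approach buys is independence from the external reference; what it still owes is the claim that left translation descends to an isometry of the base, which you justify by appealing to essentially the same submersion property the paper cites. That appeal can be removed: the identity $h(N)=N\widehat F\,\overline{N^t}$ (the computation already carried out in the proof of Lemma~\ref{lemma:pullbackrotation}) shows directly that $P\mapsto AP$ covers the map $q\mapsto Aq\overline{A^t}$ of $\mathbb{S}^2$, which is the isometric action of $A^t\in SU_2(\mathbb{C})$, so your argument can be made fully self-contained.
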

\begin{proof}
The proof follows from the fact that the Hopf fibration is a Riemannian submersion between $\mathbb{S}^3$ and $\mathbb{S}^2_{\frac{1}{2}}=\{(x,y,z)\in\mathbb{R}^3 | x^2+y^2+z^2=\frac{1}{4}\}$ with their standard Riemannian metrics of sectional curvature $+1$ and $+4$ respectively, see Proposition~1.1 and Proposition~1.2 of \cite{GluckZiller}.
\end{proof}

Every rotation about a fibre of the Hopf fibration induces a rotation about a point of its base.
\begin{lemma}\label{lemma:pullbackrotation}
Given a rotation $\langle A, B \rangle \in SU_2(\mathbb{C})\times SU_2(\mathbb{C})$ about a fibre $C(t)$ of the Hopf fibration, the transformation $A \in SU_2(\mathbb{C})$ induces a rotation of $\mathbb{S}^2$ about the point to which $C(t)$ projects under the Hopf map.
\end{lemma}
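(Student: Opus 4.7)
The plan is to reduce the claim to an elementary commutation of $2\times 2$ matrices. The key preliminary observation is that for $A \in SU_2(\mathbb{C})$ one has $\overline{A} = (A^t)^{-1}$, so the $\mathbb{S}^2$-action $q \mapsto A^t q \overline{A}$ is precisely conjugation by $A^t$; in particular, $A$ fixes $q$ if and only if $A^t$ commutes with $q$ as a $2\times 2$ matrix. This converts the geometric statement into an algebraic one.

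Starting from $A = \overline{M(\psi,\theta)}\,R(\omega)\,M(\psi,\theta)^t$ supplied by Lemma~\ref{lemma:rotation}, I would abbreviate $M = M(\psi,\theta)$ and use symmetry of $R(\omega)$ together with $\overline{M}^t = M^{-1}$ (valid in $SU_2(\mathbb{C})$) to obtain $A^t = M\,R(\omega)\,M^{-1}$. From the explicit formula for $R(\omega)$ one reads off the identity $R(\omega) = \cos(\omega/2)\,I + \sin(\omega/2)\,Q_k$, where $Q_k = \left(\begin{smallmatrix}0 & i \\ i & 0\end{smallmatrix}\right)$ is the matrix form of $(0,0,1) \in \mathbb{S}^2$; thus $R(\omega)$ commutes with $Q_k$, and conjugating by $M$ shows that $A^t = MR(\omega)M^{-1}$ commutes with $MQ_kM^{-1}$.

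To finish, I need to identify $MQ_kM^{-1}$ with the point $\widehat{C} \in \mathbb{S}^2$ to which $C(t)$ projects, i.e.\ to establish the conjugation form $h(M) = MQ_kM^{-1}$ of the Hopf map. Since $h$ is constant on fibres, $h(C(t)) = h(C(0)) = h(M)$, so it suffices to expand the $2\times 2$ product $MQ_kM^{-1}$ using the explicit formula for $M(\psi,\theta)$; a short calculation produces the matrix form of $(\cos\psi\sin\theta,\,\sin\psi\sin\theta,\,\cos\theta)$, which is exactly $h(M)$ under the coordinate formula $h(w,x,y,z) = (2(xz+wy),\,2(yz-wx),\,1-2(x^2+y^2))$ recorded at the start of the section.

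The conclusion is then immediate: $A^t$ commutes with $\widehat{C}$, so $A$ fixes $\widehat{C}$ under the action on $\mathbb{S}^2$; since $A$ realises a non-trivial element of $SO_3(\mathbb{R})$ whenever $\omega$ is not a multiple of $2\pi$, the induced isometry of $\mathbb{S}^2$ is a rotation about $\widehat{C}$. The one mildly involved step I anticipate is the explicit matrix expansion verifying $h(M) = MQ_kM^{-1}$; everything else is formal.
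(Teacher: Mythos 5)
Your argument is correct and follows essentially the same route as the paper: both reduce the claim to the fact that $R(\omega)$ fixes $\widehat{F}=\left(\begin{smallmatrix}0 & i\\ i & 0\end{smallmatrix}\right)$, conclude that $A$ fixes $\widehat{C}=M\widehat{F}\,\overline{M^t}=MQ_kM^{-1}$, and then identify this point with the Hopf image $(\psi,\theta)$ of the fibre by an explicit $2\times 2$ computation. Your observation that the Hopf map itself is the conjugation $P\mapsto PQ_kP^{-1}$ is a pleasant conceptual repackaging of that final computation, but it is not a substantively different proof.
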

\begin{proof}
Rotation about the fibre $C(t) = M(\psi, \theta) F(t)$ which projects to the point $(\psi, \theta) \in \mathbb{S}^2$ has the form
\begin{equation*}
\langle A, B \rangle = \langle \overline{M(\psi, \theta)}R(\omega)M(\psi, \theta)^t, R(\omega)\rangle.
\end{equation*}
Observe that the rotation $\langle R(\omega), R(\omega) \rangle$ fixes the geodesic $F(t)$ in $\mathbb{S}^3$ and $R(\omega)$ fixes the point $\widehat{F} = \left(\begin{array}{cc}0 & i \\i & 0 \\\end{array}\right)$ in $\mathbb{S}^2$.
Thus $A \in SU_2(\mathbb{C})$ fixes the point $\widehat{C} = M(\psi,\theta)\widehat{F}\overline{M(\psi,\theta)^t}.$ By a straightforward computation, we obtain that
\begin{equation*}
\widehat{C} = \left(\begin{array}{cc}i \cos\psi \sin\theta & \sin\theta \sin\psi + i \cos\theta \\-\sin\theta \sin\psi + i \cos\theta & -i \cos\psi \sin\theta \\\end{array}\right).
\end{equation*}
The point $\widehat{C} \in \mathbb{S}^2$ corresponds to $(\psi, \theta)$ w.r.t. the polar coordinates.
\end{proof}
\section{Examples of rigidity and flexibility}

In this section we work out two principal examples of Seifert fibred cone-manifolds: the first represents a rigid cone-manifold, the second one is flexible.
\subsection{Case of rigidity: the cone-manifold $\mathcal{H}_3(\alpha,\beta,\gamma)$}

Let $\mathcal{H}_3(\alpha,\beta,\gamma)$ denote a three-dimensional cone-manifold with underlying space the sphere $\mathbb{S}^3$ and singular locus formed by the link $\mathcal{H}_3$ with cone angles $\alpha$, $\beta$ and $\gamma$ along its components.
The remaining discussion is devoted to the proof of
\begin{theorem}\label{theorem:rigidcase}
The cone-manifold $\mathcal{H}_3(\alpha,\beta,\gamma)$ admits a spherical structure if the following inequalities are satisfied:
\begin{equation*}
2\pi - \gamma < \alpha + \beta < 2\pi + \gamma,
\end{equation*}
\begin{equation*}
-2\pi + \gamma < \alpha - \beta < 2\pi - \gamma.
\end{equation*}
The spherical structure on $\mathcal{H}_3(\alpha,\beta,\gamma)$ is unique (i.e. $\mathcal{H}_3(\alpha,\beta,\gamma)$ is globally rigid).

The lengths $\ell_\alpha$, $\ell_\beta$, $\ell_\gamma$ of its singular strata are pairwise equal and the following formula holds:
\begin{equation*}
\ell_\alpha = \ell_\beta = \ell_\gamma = \frac{\alpha+\beta+\gamma}{2} - \pi.
\end{equation*}

The volume of $\mathcal{H}_3(\alpha,\beta,\gamma)$ equals
\begin{equation*}
{\rm Vol}\,\mathcal{H}_3(\alpha,\beta,\gamma) = \frac{1}{2}\left( \frac{\alpha+\beta+\gamma}{2} - \pi \right)^2.
\end{equation*}
\end{theorem}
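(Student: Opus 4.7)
The strategy is to reduce the three-dimensional problem to two-dimensional spherical geometry on the base of the Hopf fibration, which by construction is already adapted to the singular locus $\mathcal{H}_3$. Let $p_\alpha, p_\beta, p_\gamma \in \mathbb{S}^2$ denote the images under the Hopf map of the three components of $\mathcal{H}_3$. By Lemma~\ref{lemma:pullbackrotation}, the holonomy rotation through angle $\alpha$ about the $\alpha$-coloured singular fibre projects to a rotation of the base through the same angle about $p_\alpha$, and likewise for $\beta$ and $\gamma$. Consequently, via the Hopf map together with the Riemannian-submersion property of Lemma~\ref{lemma:equidistant}, any spherical cone-metric on $\mathcal{H}_3(\alpha,\beta,\gamma)$ descends to a spherical cone-surface on $\mathbb{S}^2_{1/2}$ carrying three cone points of angles $\alpha,\beta,\gamma$; conversely, such a base cone-surface will lift back to the total space.

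Next I would characterise when such a base exists. A sphere of curvature $+4$ with three cone points of total angles $\alpha,\beta,\gamma$ is nothing other than the metric double of a spherical triangle with vertex angles $\alpha/2, \beta/2, \gamma/2$, and a spherical triangle with prescribed angles exists and is unique up to isometry precisely when
\begin{equation*}
\tfrac{\alpha+\beta+\gamma}{2} > \pi,\ \ \tfrac{\alpha+\beta-\gamma}{2} < \pi,\ \ \tfrac{\alpha-\beta+\gamma}{2} < \pi,\ \ \tfrac{-\alpha+\beta+\gamma}{2} < \pi,
\end{equation*}
as one sees by passing to the polar triangle (whose sides $\pi - \alpha/2$, $\pi - \beta/2$, $\pi - \gamma/2$ are forced to satisfy the spherical triangle inequalities). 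Rewriting these four conditions in terms of the original cone angles produces exactly the two double inequalities of the theorem. Gauss--Bonnet on the base, with sectional curvature $+4$, then gives $\mathrm{Area}(\Sigma) = (\alpha+\beta+\gamma-2\pi)/4$.

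Given the unique base $\Sigma$, the total cone-manifold is reconstructed as the natural Riemannian $S^1$-bundle over $\Sigma$ modelled on the Hopf bundle. Since all fibres are geodesic circles sharing a common length $\ell$, the three singular strata have length $\ell$ as well, and the volume factorises as $\mathrm{Vol} = \ell\cdot\mathrm{Area}(\Sigma)$. The compatibility condition forcing sectional curvature $+1$ on the regular part fixes $\ell = 2\,\mathrm{Area}(\Sigma) = (\alpha+\beta+\gamma)/2-\pi$, a relation already visible in the unperturbed round case where $\alpha=\beta=\gamma=2\pi$ yields $\ell=2\pi$ and $\mathrm{Area}(\Sigma)=\pi$; hence $\mathrm{Vol} = \tfrac12\ell^2$. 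Global rigidity follows because the Seifert-fibred reduction through the Hopf map is canonical, so uniqueness of the base implies uniqueness of the total space.

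The main obstacle is the third step: verifying rigorously that the proposed $S^1$-bundle metric is genuinely spherical with the prescribed cone angles, and pinning down $\ell$ from the compatibility equation. I expect this to be carried out by an explicit coordinate computation with the matrices $M(\psi,\theta)$ and $R(\omega)$ of Section~3, rather than by appealing to a general bundle-theoretic argument; in particular, one must verify that the horizontal subspace of the submersion remains perpendicular to the fibres in the cone-metric, so that lengths and volumes split as asserted.
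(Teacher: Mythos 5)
Your reduction to the base turnover identifies the right picture and arrives at the correct inequalities, length and volume, but the two steps that carry the real weight of the theorem are asserted rather than proved. First, the existence direction: ``such a base cone-surface will lift back to the total space'' is precisely the hard part, and you flag it yourself as the main obstacle without resolving it. The paper does not assemble an $S^1$-bundle metric over $\Sigma$; it writes down a candidate holonomy representation $\widetilde{\rho}:\pi_1(\mathbb{S}^3\setminus\mathcal{H}_3)\to SU_2(\mathbb{C})\times\mathbb{S}^1$ explicitly (Lemma~\ref{lemma:triangle} verifies the Wirtinger relations via the spherical sine and cosine rules) and then proves that this representation is realised by an actual cone structure through an openness--closedness argument on the angle region $\mathcal{S}$: non-emptiness comes from the orbifold $\mathcal{H}_3(\pi,\pi,\pi)$ in Dunbar's list, openness from Porti's regeneration theorem, and closedness from a non-collapsing argument using Alexandrov spaces of curvature $\geq 1$ together with the lower bound on the mutual distances of the singular axes supplied by Lemma~\ref{lemma:equidistant}. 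Some such argument is unavoidable: a representation with the correct local rotation angles is not a priori the holonomy of a developing map.

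Second, both your uniqueness claim (``the reduction through the Hopf map is canonical'') and your descent claim at the outset presuppose that \emph{every} spherical cone structure on $\mathcal{H}_3(\alpha,\beta,\gamma)$ is compatible with the Hopf fibration; that is not automatic, and it is exactly where the paper uses the group theory. Since $h$ is central in $\Gamma$ and the representation cannot be abelian, $\widetilde{\rho}(h)$ is forced into $\{\pm\mathrm{id}\}\times\mathbb{S}^1$, every meridian image is then a rotation about a Hopf fibre (Lemma~\ref{lemma:rotation}), and Gluck--Ziller gives preservation of the fibration. Even granting this, the paper derives local rigidity from the rigidity of the spherical turnover and then global rigidity by a path of locally rigid structures to the orbifold point $(\pi,\pi,\pi)$, where de~Rham--Rothenberg rigidity applies. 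Finally, your length formula rests on an unproved ``compatibility condition'' $\ell=2\,\mathrm{Area}(\Sigma)$ checked only against the round case; the paper instead computes the translation length of $\widetilde{\rho}(h)$ directly (obtaining $\mathrm{tr}\,H_l=-2$ and $\mathrm{tr}\,H_r=2\cos\frac{\alpha+\beta+\gamma}{2}$) and gets the volume from the Schl\"afli formula rather than from a Fubini-type factorisation $\mathrm{Vol}=\ell\cdot\mathrm{Area}(\Sigma)$, which would itself require proving that all fibres of the deformed metric have equal length.
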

\begin{proof}
First, we construct a holonomy map for $\mathcal{H}_3(\alpha, \beta, \gamma)$. By applying Wirtinger's algorithm, one obtains the following fundamental group presentation for the link $\mathcal{H}_3$:
\begin{equation*}
\Gamma = \pi_1(\mathbb{S}^3 \setminus \mathcal{H}_3) = \langle a,b,c,h | acb = bac = cba = h, h\in Z(\Gamma) \rangle,
\end{equation*}
that is a central extension by $h$ of the thrice-punctured sphere group
\begin{equation*}
\Gamma_0 = \pi_1(\mathbb{S}^2 \setminus \{\mbox{3 points}\}) = \langle a,b,c | acb = bac = cba = \mathrm{id} \rangle.
\end{equation*}
Consider a holonomy map
\begin{equation*}
\rho : \Gamma \longmapsto {\rm Isom}^+\,\mathbb{S}^3 \cong SO_4(\mathbb{R}).
\end{equation*}

\begin{figure}[ht]
\begin{center}
\includegraphics* [totalheight=5cm]{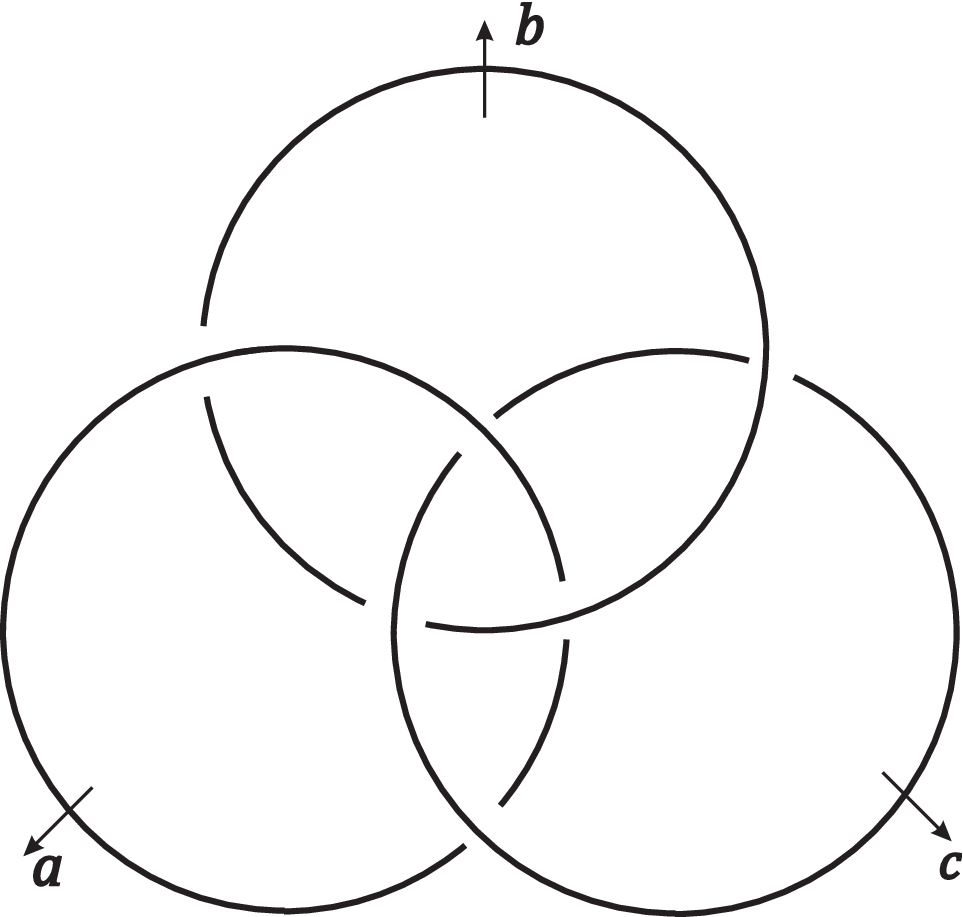}
\end{center}
\caption{The link $\mathcal{H}_3$} \label{figlink33}
\end{figure}

Let $\widetilde{\rho}$ denote its lift to $SU_2(\mathbb{C})\times SU_2(\mathbb{C})$, which is a two-fold covering of $SO_4(\mathbb{R})$ (see~\cite{Culler}):
\begin{equation*}
\widetilde{\rho} = \langle \widetilde{\rho}_1, \widetilde{\rho}_2 \rangle : \Gamma \longmapsto SU_2(\mathbb{C})\times SU_2(\mathbb{C}).
\end{equation*}

Let us note, that if holonomy images of any two generators of $\Gamma$ commute, then the whole homomorphic image $\widetilde{\rho}(\Gamma)$ is abelian. Thus, for a representation $\widetilde{\rho}$ we have that the following three cases, up to a suitable conjugation, are possible:
\begin{itemize}
\item[(i)] $\widetilde{\rho} = (\widetilde{\rho}_1, \widetilde{\rho}_2): \Gamma \rightarrow SU_2(\mathbb{C})\times SU_2(\mathbb{C})$, both $\widetilde{\rho}_1$ and $\widetilde{\rho}_2$ are non-abelian,
\item[(ii)] $\widetilde{\rho}: \Gamma \rightarrow \mathbb{S}^1\times \mathbb{S}^1$, an abelian representation,
\item[(iii)] $\widetilde{\rho} = (\widetilde{\rho}_1, \widetilde{\rho}_2): \Gamma \rightarrow SU_2(\mathbb{C})\times \mathbb{S}^1$, where $\widetilde{\rho}_1$ is non-abelian.
\end{itemize}

For case (i), let us first suppose that $\tilde{\rho}(h)$ is non-trivial. Since the holonomy  images of the meridians $a$, $b$ and $c$ have to commute with the holonomy image of $h$, they are simultaneously diagonalisable. We arrive at case (ii).

If $\widetilde{\rho}(h)$ is trivial, then we have two non-abelian representations $\widetilde{\rho}_i: \Gamma_0 \rightarrow SU_2(\mathbb{C})$. Since the holonomy images of the meridians correspond to rotations along geodesic lines in $\mathbb{S}^3$, it follows by \cite[Lemma 9.2]{BoileauLeebPorti2005} that $\mathrm{tr} \widetilde{\rho}_1(x) = \mathrm{tr} \widetilde{\rho}_2(x)$ for $x\in\{a,b,c\}$. The base space of the fibred cone-manifold $\mathcal{H}_3(\alpha,\beta,\gamma)$ is a turnover $\mathbb{S}^2(\alpha,\beta,\gamma)$, with $\alpha$, $\beta$, $\gamma$ cone angles. Then, by \cite[Lemma~4.1]{Goldman}, up to a conjugation, $\widetilde{\rho} = (\widetilde{\rho}_1, \widetilde{\rho}_1)$. The representation $\rho: \Gamma \rightarrow SO(4)$ is conjugate into $SO(3)$ and the holonomy images of the meridians have a common fixed point in $\mathbb{S}^3$. Thus, their axis intersect, which does not correspond to a non-degenerate spherical structure on the cone-manifold $\mathcal{H}_3(\alpha,\beta,\gamma)$.

For case (ii), up to a suitable conjugation, the representation $\widetilde{\rho}$ preserves the Hopf fibration. Thus, by Lemma~\ref{lemma:pullbackrotation}, it descends to an abelian representation of $\Gamma_0$, which cannot be a holonomy of a non-degenerate spherical structure on the base of the fibration. 

Finally, case (iii) is left. By \cite[Lemma 9.2]{BoileauLeebPorti2005}, one has
\begin{equation*}
\widetilde{\rho}(a) = \langle m^t_a R(\alpha) \overline{m_a}, R(\alpha) \rangle,
\end{equation*}
\begin{equation*}
\widetilde{\rho}(b) = \langle m^t_b R(\beta) \overline{m_b}, R(\beta) \rangle,
\end{equation*}
\begin{equation*}
\widetilde{\rho}(c) = \langle m^t_c R(\gamma) \overline{m_c}, R(\gamma) \rangle
\end{equation*}
for $m_a$, $m_b$, $m_c \in SU_2(\mathbb{C})$. 

Note, that every matrix $m \in SU_2(\mathbb{C})$ is of the form $m = R(\tau) M(\psi,\theta)$ for suitable $0 \leq \psi \leq \pi$, $0 \leq \theta, \tau \leq 2\pi$. Then we obtain that the image of every meridian in $\Gamma = \pi_1(\mathbb{S}^3\setminus \mathcal{H}_3)$ has the form 
\begin{equation*}
\langle m^t R(\omega) \overline{m}, R(\omega) \rangle = \langle M^t(\psi, \theta) R^t(\tau) R(\omega) \overline{R(\tau)}\, \overline{M(\psi, \tau)}, R(\omega)\rangle =
\end{equation*}
\begin{equation*}
\langle M^t(\psi, \theta) R(\omega) \overline{M(\psi, \theta)}, R(\omega) \rangle,
\end{equation*}
since $R(\omega)$ and $R(\tau)$ commute. Hence, Lemma~\ref{lemma:rotation} implies that every meridian is mapped by $\widetilde{\rho}$ to a rotation about an appropriate fibre of the Hopf fibration. By Propositions~2.1 and~2.2 of~\cite{GluckZiller}, the holonomy preserves the fibration structure.  

Let $A = \widetilde{\rho}(a)$, $B = \widetilde{\rho}(b)$, $C = \widetilde{\rho}(c)$ be holonomy images of the generators $a$, $b$, $c$ for $\Gamma = \pi_1(\mathbb{S}^3 \setminus \mathcal{H}_3)$.

After a suitable conjugation in $SU_2(\mathbb{C}) \times SU_2(\mathbb{C})$, we obtain
\begin{equation*}
A = \langle A_l, A_r \rangle = \left\langle R(\alpha), R(\alpha) \right\rangle,
\end{equation*}
\begin{equation*}
B = \langle B_l, B_r \rangle = \left\langle \overline{M(0,\phi)}R(\beta)M(0,\phi)^t, R(\beta) \right\rangle,
\end{equation*}
\begin{equation*}
C = \langle C_l, C_r \rangle = \left\langle \overline{M(\psi,\theta)}R(\gamma)M(\psi,\theta)^t, R(\gamma) \right\rangle.
\end{equation*}

In order for the holonomy map $\widetilde{\rho}$ to be a homomorphism, the following relations should hold:
\begin{equation*}
A_lC_lB_l = B_lA_lC_l = C_lB_lA_l,
\end{equation*}
\begin{equation*}
A_rC_rB_r = B_rA_rC_r = C_rB_rA_r.
\end{equation*}

The latter of them are satisfied by the construction of $\widetilde{\rho}: \Gamma \rightarrow SU_2(\mathbb{C}) \times \mathbb{S}^1$.

Let us consider the former relations. By Lemma \ref{lemma:pullbackrotation}, the elements $A_l$, $B_l$ and $C_l$ are rotations of $\mathbb{S}^2$ about the points $\widehat{F}_a = (0, 0)$, $\widehat{F}_b = (0, \phi)$ and $\widehat{F}_c = (\psi, \theta)$, respectively. Since $\widehat{F}_a$, $\widehat{F}_b$, $\widehat{F}_c$ form a triangle on $\mathbb{S}^2$ and the base space of $\mathcal{H}_3(\alpha, \beta, \gamma)$ is a turnover with $\alpha$, $\beta$, $\gamma$ cone angles, one may expect the following
\begin{lemma}\label{lemma:triangle}
The points $\widehat{F}_a = (0, 0)$, $\widehat{F}_b = (0, \phi)$ and $\widehat{F}_c = (\psi, \theta)$ form a triangle with angles $\frac{\alpha}{2}$, $\frac{\beta}{2}$ and $\frac{\gamma}{2}$ at the corresponding vertices.
\end{lemma}
\begin{proof}
By a straightforward computation, we obtain that
\begin{equation*}
A_lC_lB_l - B_lA_lC_l = \left(\begin{array}{cc}i R_1 & R_2 + i R_3 \\-R_2 + i R_3 & -i R_1 \\\end{array}\right),
\end{equation*}
\begin{equation*}
C_lB_lA_l - B_lA_lC_l = \left(\begin{array}{cc}i R_4 & R_5 + i R_3 \\-R_5 + i R_3 & - i R_4 \\\end{array}\right),
\end{equation*}
where
\begin{eqnarray*}
&R_1 = 2 \sin\frac{\beta}{2} \sin\frac{\gamma}{2} \sin\theta \cos\phi \sin\left( \frac{\alpha}{2} - \psi \right),
\end{eqnarray*}

\begin{eqnarray*}
&R_2  =  2 \sin\frac{\beta}{2}\left( \cos\frac{\gamma}{2}\sin\frac{\alpha}{2}\sin\phi +
\sin\frac{\gamma}{2}\left( -\cos\phi \cos\left( \frac{\alpha}{2} - \psi \right)\sin\theta +\right.\right.\\&
\left.\left.\cos\frac{\alpha}{2}\cos\theta\sin\phi \right) \right),&
\end{eqnarray*}

\begin{eqnarray*}
&R_3 = -2\sin\frac{\beta}{2} \sin\frac{\gamma}{2} \sin\theta \sin\phi \sin\left( \frac{\alpha}{2} - \psi \right),
\end{eqnarray*}

\begin{eqnarray*}
&R_4 = 2\sin\frac{\gamma}{2}\left( \cos\theta\sin\frac{\alpha}{2}\sin\frac{\beta}{2}\sin\phi - \left( \cos\frac{\beta}{2}\sin\frac{\alpha}{2} +\right.\right.\\&
\left.\left.\cos\frac{\alpha}{2}\sin\frac{\beta}{2}\cos\phi \right)\sin\theta \sin\psi \right),&
\end{eqnarray*}

\begin{eqnarray*}
&R_5 = 2\sin\frac{\gamma}{2}\left( \cos\frac{\beta}{2}\cos\psi\sin\frac{\alpha}{2}\sin\theta + \right.\\&
\left.\cos\frac{\alpha}{2}\sin\frac{\beta}{2}(\cos\phi \cos\psi \sin\theta - \cos\theta \sin\phi) \right).&
\end{eqnarray*}

In order to determine the parameters $\phi$, $\psi$ and $\theta$, one can proceed as follows: these are determined by the system of equations $R_k = 0$, $k \in \{1,\ldots,5\}$ under the restrictions $0 < \alpha,\beta,\gamma < 2\pi$ and $0 < \psi \leq 2\pi$, $0 < \theta \leq \pi$. Thus, the common solutions to $R_1$ and $R_3$ are $\psi = \frac{\alpha}{2}$ and $\psi = \frac{\alpha}{2} \pm \pi$. We claim that the cone angles in the base space of $\mathcal{H}_3(\alpha, \beta, \gamma)$ and along its fibres are the same, and choose $\psi = \frac{\alpha}{2}$.

Taking into account that $0 < \alpha,\beta,\gamma < 2\pi$ (this implies that the sine functions of half cone angles are non-zero), turn the set of relations $R_k$, $k\in\{1,\ldots,5\}$ into a new one:
\begin{eqnarray*}
&\widetilde{R}_1 = -\cos\phi \sin\frac{\gamma}{2}\sin\theta + \left( \sin\frac{\alpha}{2}\cos\frac{\gamma}{2} + \cos\frac{\alpha}{2}\sin\frac{\gamma}{2}\cos\theta \right) \sin\phi,
\end{eqnarray*}
\begin{eqnarray*}
&\widetilde{R}_2 = -\cos\theta \sin\frac{\beta}{2}\sin\phi + \left( \sin\frac{\alpha}{2}\cos\frac{\beta}{2} + \cos\frac{\alpha}{2}\sin\frac{\beta}{2}\cos\phi \right) \sin\theta.
\end{eqnarray*}

Note, that the conditions of Theorem~\ref{theorem:rigidcase} concerning cone angles are exactly the existence conditions for a spherical triangle with angles $\frac{\alpha}{2}$, $\frac{\beta}{2}$ and $\frac{\gamma}{2}$. For the latter, the following trigonometric identities (spherical cosine and sine rules) are satisfied \cite[Theorems 2.5.2 and 2.5.4]{Ratcliffe}:
\begin{equation*}
\cos\phi = \frac{\cos\frac{\gamma}{2} + \cos\frac{\alpha}{2}\cos\frac{\beta}{2}}{\sin\frac{\alpha}{2}\sin\frac{\beta}{2}},
\end{equation*}
\begin{equation*}
\cos\theta = \frac{\cos\frac{\beta}{2} + \cos\frac{\alpha}{2}\cos\frac{\gamma}{2}}{\sin\frac{\alpha}{2}\sin\frac{\gamma}{2}},
\end{equation*}
\begin{equation*}
\frac{\sin\phi}{\sin\frac{\gamma}{2}} = \frac{\sin\theta}{\sin\frac{\beta}{2}}.
\end{equation*}
These identities state that the points $\widehat{F}_a$, $\widehat{F}_b$ and $\widehat{F}_c$  form a triangle on $\mathbb{S}^2$ with angles $\frac{\alpha}{2}$, $\frac{\beta}{2}$ and $\frac{\gamma}{2}$ at the corresponding vertices. Its double provides the base turnover with cone angles $\alpha$, $\beta$ and $\gamma$ for the fibred cone-manifold $\mathcal{H}_3(\alpha,\beta,\gamma)$.

On substituting the expressions for $\cos\phi$ and $\cos\psi$ above in the relations $\widetilde{R}_k$, $k \in \{1, 2\}$ and taking into account the sine rule, one obtains that $\widetilde{R}_k =0$, $k\in\{1, 2\}$. The lemma is proven.
\end{proof}

Let $\mathcal{S}$ denote the domain of cone angles indicated in the statement of the theorem:
\begin{equation*}
\mathcal{S} = \left\{\overrightarrow{\alpha} = (\alpha, \beta, \gamma) \left| 
\begin{array}{c}
\,\,\, 2\pi - \gamma < \alpha + \beta < 2\pi + \gamma \\
-2\pi + \gamma < \alpha-\beta < 2\pi - \gamma
\end{array}\right. \right\}.
\end{equation*}
Let $\mathcal{S}^{\ast}$ denote the subset of $\mathcal{S}$, such that for every triple of cone angles $\overrightarrow{\alpha} = (\alpha, \beta, \gamma)\in\mathcal{S}^{\ast}$ there exists a spherical structure on $\mathcal{H}_3(\overrightarrow{\alpha})$. Our next step is to show that $\mathcal{S}^{\ast}$ coincides with $\mathcal{S}$.

The set $\mathcal{S}^{\ast}$ is non-empty. From \cite{Dunbar}, it follows that $\mathcal{H}_3(\pi,\pi,\pi)$ has a spherical structure. The orbifold $\mathcal{H}_3(\pi,\pi,\pi)$ is Seifert fibred and its base is a turnover with cone angles equal to $\pi$. Thus, the point $(\pi,\pi,\pi)\in\mathcal{S}$ belongs to $\mathcal{S}^{\ast}$.

The set $\mathcal{S}^{\ast}$ is open, because a deformation of the holonomy induces a deformation of the structure \cite{Porti1998}.

In order to prove that the set $\mathcal{S}^{\ast}$ is closed, we consider a sequence $\overrightarrow{\alpha}_n = (\alpha_n, \beta_n, \gamma_n)$ in $\mathcal{S}^{\ast}$ converging to $\overrightarrow{\alpha}_{\infty} = (\alpha_\infty, \beta_\infty, \gamma_\infty)$ in $\mathcal{S}$. Since every spherical cone-manifold with cone angles $\leq 2\pi$ is an Alexandrov space with curvature $\geq 1$ \cite{BurGroPer}, we obtain that the diameter of $\mathcal{H}_3(\overrightarrow{\alpha}_n)$ is bounded above: ${\rm diam}\,\mathcal{H}_3(\overrightarrow{\alpha}_n) \leq \pi$.

Let ${\rm dist}\, \mathcal{H}_3(\overrightarrow{\alpha}_n)$ denote the minimum of the mutual distances between the axis of rotations $A$, $B$ and $C$. Since $\overrightarrow{\alpha}_{\infty}\in\mathcal{S}$, we have by Lemma~\ref{lemma:triangle} that the turnover $\mathbb{S}^2(\overrightarrow{\alpha}_{\infty})$ is non-degenerate. By making use of Lemma \ref{lemma:equidistant}, one obtains that (restricting to a subsequence, if needed) for every $\overrightarrow{\alpha}_n\in\mathcal{S}$, $n=1,2,\dots$ the function ${\rm dist}\, \mathcal{H}_3(\overrightarrow{\alpha}_n)$ is uniformly bounded below away from zero:
\begin{equation*}
{\rm dist}\,\, \mathcal{H}_3(\overrightarrow{\alpha}_n) \geq d_0 > 0,\,\,\,n=1,2,\dots
\end{equation*}

Then we use the following facts \cite{BurGroPer}:
\begin{enumerate}
\item The Gromov-Hausdorff limit of Alexandrov spaces with curvature $\geq 1$, dimension $= 3$ and bounded diameter is an Alexandrov space with curvature $\geq 1$ and dimension $\leq 3$,

\item Dimension of an Alexandrov space with curvature $\geq 1$ holds the same at every point (the word ``dimension'' means Hausdorff or topological dimension, which are equal in the case of curvature $\geq 1$).
\end{enumerate}

Since ${\rm dist}\,\, \mathcal{H}_3(\overrightarrow{\alpha}_n) \geq d_0 > 0$, the sequence $\mathcal{H}_3(\overrightarrow{\alpha}_n)$ does not collapse. Thus, the cone-manifold $\mathcal{H}_3(\overrightarrow{\alpha}_{\infty})$ has a non-degenerate spherical structure and $\overrightarrow{\alpha}_{\infty} \in \mathcal{S}^{\ast}$.

The subset $\mathcal{S}^{\ast} \subset \mathcal{S}$ is non-empty, as well as both closed and open. This implies $\mathcal{S}^{\ast} = \mathcal{S}$.

Finally, we claim the following fact concerning the geometric characteristics of $\mathcal{H}_3(\alpha, \beta, \gamma)$ cone-manifold:
\begin{lemma}\label{lemma:geometryofH_3}
Let $\ell_\alpha$, $\ell_\beta$, $\ell_\gamma$ denote the lengths of the singular strata for $\mathcal{H}_3(\alpha, \beta, \gamma)$ cone-manifold with cone angles $\alpha$, $\beta$ and $\gamma$. Then
\begin{equation*}
\ell_\alpha = \ell_\beta = \ell_\gamma = \frac{\alpha + \beta + \gamma}{2} - \pi.
\end{equation*}
The volume of $\mathcal{H}_3(\alpha, \beta, \gamma)$ is
\begin{equation*}
{\rm Vol}\,\mathcal{H}_3(\alpha, \beta, \gamma) = \frac{1}{2}\left( \frac{\alpha + \beta + \gamma}{2} - \pi \right)^2.
\end{equation*}
\end{lemma}
\begin{proof}
Let us calculate the geometric parameters explicitly, using the holonomy map defined above.
First, we introduce two notions suitable for the further discussion. Given an element $M = \langle M_l, M_r \rangle \in SU_2(\mathbb{C})\times SU_2(\mathbb{C})$, one may assume that the pair of matrices $\langle M_l, M_r \rangle$ is conjugated, by means of a certain element $\langle C_l, C_r \rangle \in SU_2(\mathbb{C})\times SU_2(\mathbb{C})$, to the pair of diagonal matrices
\begin{equation*}
\left\langle \left(\begin{array}{cc}e^{i \gamma} & 0 \\0 & e^{-i \gamma} \\\end{array}\right), \left(\begin{array}{cc}e^{i \varphi} & 0 \\0 & e^{-i \varphi} \\\end{array}\right)\right\rangle
\end{equation*}
with $0 \leq \gamma, \varphi \leq \pi$.

Then call \textit{the translation length} of $M$ the quantity $\delta(M) := \varphi - \gamma$ and call \textit{the ``jump''} of $M$ the quantity $\nu(M) := \varphi + \gamma$, see \cite{HildenLosanoMontesinos} and \cite[Ch.6.4.2]{Weiss2005}. We suppose that $\varphi > \gamma$, otherwise changing $\gamma$, $\varphi$ for $2\pi - \gamma$ and $\pi - \varphi$ makes the considered tuple to have the desired form.

Recall that the representation of $\Gamma = \pi_1(\mathbb{S}^3\setminus \mathcal{H}_3)$ is
\begin{equation*}
\Gamma = \langle a, b, c, h | acb = bac = cba = h, h \in Z(\Gamma) \rangle,
\end{equation*}
where $a$, $b$, $c$ are meridians and $h$ is a longitudinal loop that represents a fibre. Denote by $H$ the image of $h$ under the holonomy map $\widetilde{\rho}$. Then we obtain
\begin{equation*}
\ell_\alpha = \ell_\beta = \ell_\gamma = \delta(H).
\end{equation*}

Since $A = \widetilde{\rho}(a)$ and $H = \widetilde{\rho}(h)$ commute, there exists an element $C = \langle C_l, C_r\rangle$ of $SU_2(\mathbb{C}) \times SU_2(\mathbb{C})$ such that
\begin{equation*}
CAC^{-1} = \left\langle \left(\begin{array}{cc}e^{i \frac{\alpha}{2}} & 0 \\0 & e^{-i \frac{\alpha}{2}} \\\end{array}\right),
\left(\begin{array}{cc}e^{i \frac{\alpha}{2}} & 0 \\0 & e^{-i \frac{\alpha}{2}} \\\end{array}\right) \right\rangle,
\end{equation*}
\begin{equation*}
CHC^{-1} = \left\langle \left(\begin{array}{cc}e^{i \gamma(H)} & 0 \\0 & e^{-i \gamma(H)} \\\end{array}\right), \left(\begin{array}{cc}e^{i \varphi(H)} & 0 \\0 & e^{-i \varphi(H)} \\\end{array}\right)\right\rangle.
\end{equation*}
By a straightforward computation similar to that in Lemma \ref{lemma:triangle}, one obtains
\begin{equation*}
2 \cos \gamma(H) = \mathrm{tr} H_l = \mathrm{tr} A_l C_l B_l = \mathrm{tr}( -\mathrm{id}) = 2 \cos \pi
\end{equation*}
and
\begin{equation*}
2 \cos \varphi(H) = \mathrm{tr} H_r = \mathrm{tr} A_r C_r B_r = 2\cos\frac{\alpha + \beta + \gamma}{2}.
\end{equation*}
From the foregoing discussion, the singular stratum's length is
\begin{equation*}
\ell_\alpha = \delta(H) = \frac{\alpha + \beta + \gamma}{2} - \pi.
\end{equation*}
An analogous equality holds for $\ell_\beta$ and $\ell_\gamma$.

By the Schl\"{a}fli formula \cite{Hodgson}, the following relation holds:
\begin{equation*}
2\,\,{\rm d Vol}\,\mathcal{H}_3(\alpha, \beta, \gamma) = \ell_\alpha {\rm d}\alpha + \ell_\beta {\rm d}\beta + \ell_\gamma {\rm d}\gamma.
\end{equation*}
Solving this differential equality, we obtain that
\begin{equation*}
{\rm Vol}\,\mathcal{H}_3(\alpha, \beta, \gamma) = \frac{1}{2}\left( \frac{\alpha + \beta + \gamma}{2} - \pi \right)^2 + {\rm Vol_0},
\end{equation*}
where ${\rm Vol_0}$ is an arbitrary constant.
Since the geometric structure on the base space of the fibration (consequently, on the whole $\mathcal{H}_3(\alpha, \beta, \gamma)$ cone-manifold) degenerates when $\alpha + \beta + \gamma \longrightarrow 2\pi$, the equality ${\rm Vol_0} = 0$ follows from the volume function continuity.
\end{proof}

Consider a holonomy $\widetilde{\rho} = \langle \widetilde{\rho}_1, \widetilde{\rho}_2 \rangle: \Gamma = \pi_1(\mathbb{S}^3\setminus \mathcal{H}_3) \rightarrow SU_2(\mathbb{C})\times SU_2(\mathbb{C})$ for $\mathcal{H}_3(\alpha, \beta, \gamma)$ cone-manifold. As we already know from the preceding discussion, one has $\widetilde{\rho}: \Gamma \rightarrow SU_2(\mathbb{C})\times \mathbb{S}^1$ essentially, and $\widetilde{\rho}_1$ determines $\widetilde{\rho}_2$ up to a conjugation by means of the equality $\tr \widetilde{\rho}_1(m) = \tr \widetilde{\rho}_2(m)$ for meridians in $\Gamma$. So any deformation of $\widetilde{\rho}$ is a deformation of $\widetilde{\rho}_1$. In the case of $\mathcal{H}_3(\alpha, \beta, \gamma)$, the map $\widetilde{\rho}_1$ is a non-abelian representation of the base turnover group. Spherical turnover is rigid, that means $\widetilde{\rho}_1$ is determined only by the corresponding cone angles. Thus $\mathcal{H}_3(\alpha, \beta, \gamma)$ is locally rigid. 

The global rigidity follows from the fact that every $\mathcal{H}_3(\alpha, \beta, \gamma)$ cone-manifold could be deformed to the orbifold $\mathcal{H}_3(\pi, \pi, \pi)$ by a continuous path through locally rigid structures. This assertion holds since $\mathcal{S}^{\ast}$ contains the point $(\pi, \pi, \pi)$ and $\mathcal{S}^{\ast}$ is convex. The global rigidity of $\mathcal{H}_3(\pi, \pi, \pi)$ spherical orbifold follows from \cite{deRham, Rothenberg} and implies the global rigidity of $\mathcal{H}_3(\alpha, \beta, \gamma)$ by means of deforming the orbifold structure backwards to the considered cone-manifold one.
\end{proof}
\subsection{Case of flexibility: the cone-manifold $\mathcal{H}_4(\alpha)$}

Let $\mathcal{H}_4(\alpha)$ denote a three-dimensional cone-manifold with underlying space the sphere $\mathbb{S}^3$ and singular locus formed by the link $\mathcal{H}_4$ with cone angle $\alpha$ along all its components.

\begin{figure}[ht]
\begin{center}
\includegraphics* [totalheight=6cm]{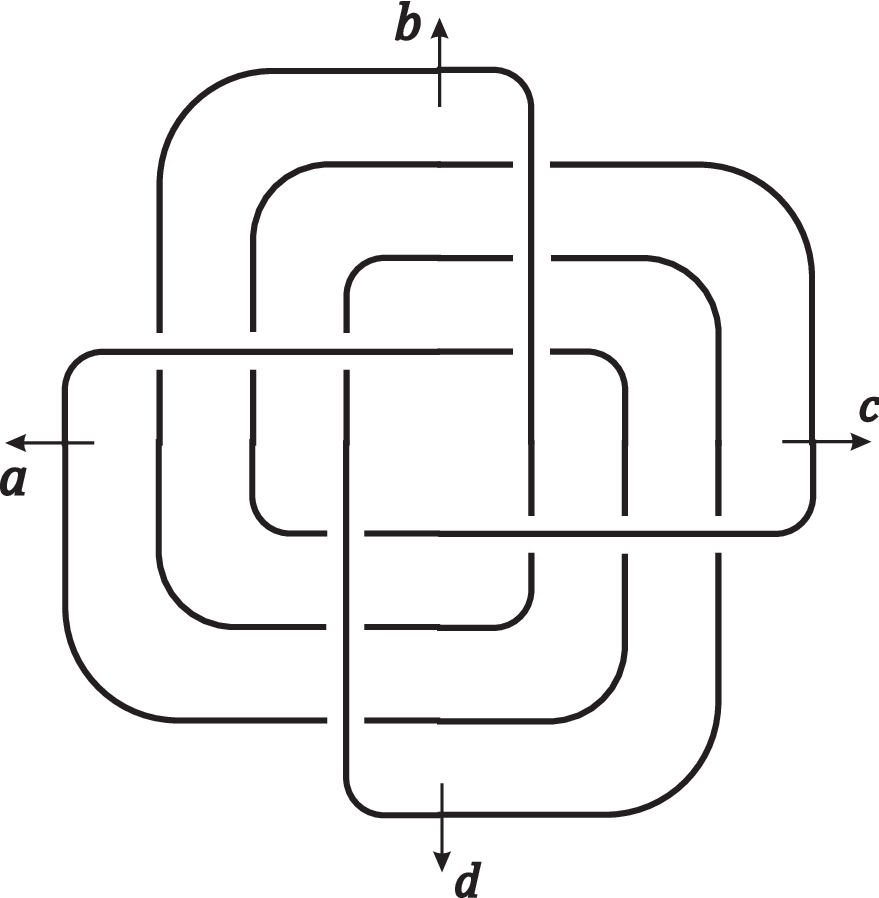}
\end{center}
\caption{The link $\mathcal{H}_4$} \label{figlink44}
\end{figure}

The following theorem provides an example of a flexible cone-manifold which is Seifert fibred.

\begin{theorem}\label{theorem:nonrigidcase}
The cone-manifold $\mathcal{H}_4(\alpha)$ admits a spherical structure if
\begin{equation*}
\pi < \alpha < 2 \pi.
\end{equation*}
This structure is not unique (i.e. $\mathcal{H}_4(\alpha)$ is not globally, nor locally rigid). The deformation space contains an open interval, that provides a one-parameter family of distinct spherical cone-metrics on $\mathbb{S}^3$.

The length of each singular stratum is
\begin{equation*}
\ell = 2(\alpha - \pi).
\end{equation*}

The volume of $\mathcal{H}_4(\alpha)$ equals
\begin{equation*}
{\rm Vol}\,\mathcal{H}_4(\alpha) = 2(\alpha - \pi)^2.
\end{equation*}
\end{theorem}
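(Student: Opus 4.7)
The plan is to follow the template of Theorem~\ref{theorem:rigidcase} with one decisive modification: the base $2$-orbifold of the Seifert fibration is now a sphere with four cone points of angle $\alpha$ (the double of a spherical quadrilateral with all four vertex angles equal to $\alpha/2$), and spherical quadrilaterals with prescribed angles --- unlike spherical triangles --- form a positive-dimensional moduli space. This is what will simultaneously yield the existence interval and the flexibility.

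First I would apply Wirtinger's algorithm to the $(4,4)$-torus link $\mathcal{H}_4$ to obtain a presentation
\begin{equation*}
\Gamma = \pi_1(\mathbb{S}^3\setminus\mathcal{H}_4) = \langle a,b,c,d,h \mid w_1 = w_2 = w_3 = w_4 = h,\,\, h \in Z(\Gamma) \rangle,
\end{equation*}
with four meridians $a,b,c,d$, cyclic four-letter words $w_i$, and a central element $h$ representing a generic Hopf fibre. Repeating verbatim the argument of Theorem~\ref{theorem:rigidcase}, a lift $\widetilde{\rho} = \langle \widetilde{\rho}_1, \widetilde{\rho}_2 \rangle$ factors through $SU_2(\mathbb{C})\times \mathbb{S}^1$ and, by Lemma~\ref{lemma:rotation}, sends every meridian to a rotation through $\alpha$ about a Hopf fibre. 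By Lemma~\ref{lemma:pullbackrotation} the first factor $\widetilde{\rho}_1$ descends to a representation of the quadrilateral base orbifold. The Gauss--Bonnet condition $4\cdot\alpha/2 > 2\pi$ for the existence of a non-degenerate spherical quadrilateral with these angles gives exactly $\alpha > \pi$, matching the statement.

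Flexibility is the heart of the theorem. I would produce an explicit one-parameter family of spherical quadrilaterals with all angles $\alpha/2$: cut along a diagonal, apply the spherical cosine and sine rules to each of the two resulting triangles, and let the diagonal length vary while redistributing how each endpoint's angle $\alpha/2$ is split between the two sub-triangles so that the four outer angles remain $\alpha/2$. Each member of the family yields a pairwise non-conjugate holonomy of $\mathcal{H}_4(\alpha)$. That each such holonomy integrates to a genuine spherical cone-structure is handled as in the previous proof by the openness of the deformation space \cite{Porti1998} together with the Alexandrov non-collapsing argument \cite{BurGroPer}, valid so long as the base quadrilateral stays non-degenerate. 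The main obstacle will be to check that these two mechanisms glue into a continuous family of cone-structures, not just a continuous family of holonomies.

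Finally, the geometric invariants mirror Lemma~\ref{lemma:geometryofH_3}. With all cone angles equal to $\alpha$, the right factor gives $\widetilde{\rho}_2(h) = R(\alpha)^4 = R(4\alpha)$, hence $2\cos\varphi(H) = 2\cos(2\alpha)$, whose appropriate branch is $\varphi(H) = 2(\alpha-\pi)$. The left factor $\widetilde{\rho}_1(h)$ is the product of four rotations by $\alpha$ about the vertices of the base quadrilateral, an element of $SO_3(\mathbb{R})$ which is the identity by the closed-polygon argument; its lift in $SU_2(\mathbb{C})$ is $+\mathrm{id}$, not $-\mathrm{id}$ as in the triangle case, by the parity observation that in the degenerate limit $\alpha\to\pi$ the four commuting rotations approach $R(\pi)^4 = \mathrm{id}$, as opposed to the three-fold case $R(2\pi/3)^3 = -\mathrm{id}$. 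Hence $\gamma(H) = 0$ and
\begin{equation*}
\ell = \delta(H) = \varphi(H) - \gamma(H) = 2(\alpha-\pi),
\end{equation*}
independently of which quadrilateral in the flexible family one has chosen. Integration of the Schl\"afli formula $2\,\mathrm{d}\mathrm{Vol} = 4\ell\,\mathrm{d}\alpha$ with the normalisation $\mathrm{Vol}\to 0$ as $\alpha\to\pi$ (degeneration of the base) then yields $\mathrm{Vol}\,\mathcal{H}_4(\alpha) = 2(\alpha-\pi)^2$.
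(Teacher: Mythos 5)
Your overall strategy coincides with the paper's: realise $\mathcal{H}_4(\alpha)$ as Seifert fibred over the double of a spherical quadrilateral with all angles $\frac{\alpha}{2}$, send meridians to rotations about Hopf fibres, check the left-factor relation by splitting the quadrilateral into two triangles, run an open--closed argument in the deformation parameter, and extract length and volume from the holonomy of $h$ and the Schl\"afli formula. The paper parametrises the quadrilateral family concretely (doubles of Saccheri quadrangles, parameter $\tau=\ell_1$), but that is a presentational difference, not a mathematical one.

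There is, however, one genuine gap. Your open--closed argument has no seed: openness and closedness of the set of parameters admitting a spherical structure prove nothing unless that set is shown to be non-empty, and --- unlike the rigid case, where $(\pi,\pi,\pi)$ gives Dunbar's orbifold $\mathcal{H}_3(\pi,\pi,\pi)$ --- the family $\mathcal{H}_4(\alpha;\tau)$ with $\pi<\alpha<2\pi$ contains no orbifold point to start from (cone angles $2\pi/n>\pi$ force $n<2$). The paper supplies the seed by observing that $\mathcal{H}_4(\alpha)$ at the symmetric parameter $\ell_1=\ell_2$ is a $4$-fold branched covering of the two-bridge cone-manifold $\mathbb{L}_4(\alpha,\frac{\pi}{2})$, whose spherical structure for $\pi<\alpha<2\pi$ is known from \cite{KolpakovMednykh}; this same covering also delivers the length and volume formulas independently of your trace computation. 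Two smaller points: (i) you assert, but do not prove, that distinct quadrilaterals give non-conjugate holonomies and non-isometric cone-manifolds --- the paper's Lemma~\ref{lemma:deformationspaceofH_4} does this via Lemma~\ref{lemma:equidistant}, since the pairwise distances between the singular fibres are half the distances between the base vertices and these vary with $\tau$; (ii) your ``parity observation'' for $H_l=+\mathrm{id}$ argues by a degenerate limit $\alpha\to\pi$ in which the quadrilateral collapses, which is not a proof --- the clean argument is the paper's: each of the two triangles contributes $-\mathrm{id}$, so the product of the four vertex rotations is $(-\mathrm{id})^2=\mathrm{id}$.
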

\begin{proof}
The following lemma precedes the proof of the theorem.

\begin{lemma}\label{lemma:quadrangle}
Given a quadrangle $Q$ on $\mathbb{S}^2$ with three right angles and one angle $\frac{\alpha}{2}$ (see Fig. \ref{quadr}), the following statements hold:
\begin{enumerate}
\item The quadrangle $Q$ exists if $\pi < \alpha < 2\pi$,
\item $\sin\ell_1 \sin\ell_2 = - \cos\frac{\alpha}{2}$,
\item $\cos\phi = \frac{\cos\ell_1 \cos\ell_2}{\sin\frac{\alpha}{2}}$,
\item $\cos\psi = \tan\ell_1 \cot\phi$,
\item $0\leq \ell_1,\,\ell_2,\,\phi,\,\psi \leq \frac{\pi}{2}$.
\end{enumerate}
\end{lemma}
\begin{proof}
We refer the reader to \cite[\S\,3.2]{Vinberg} for a detailed proof of the statements above.
\end{proof}

\begin{figure}[ht]
\begin{center}
\includegraphics* [totalheight=4cm]{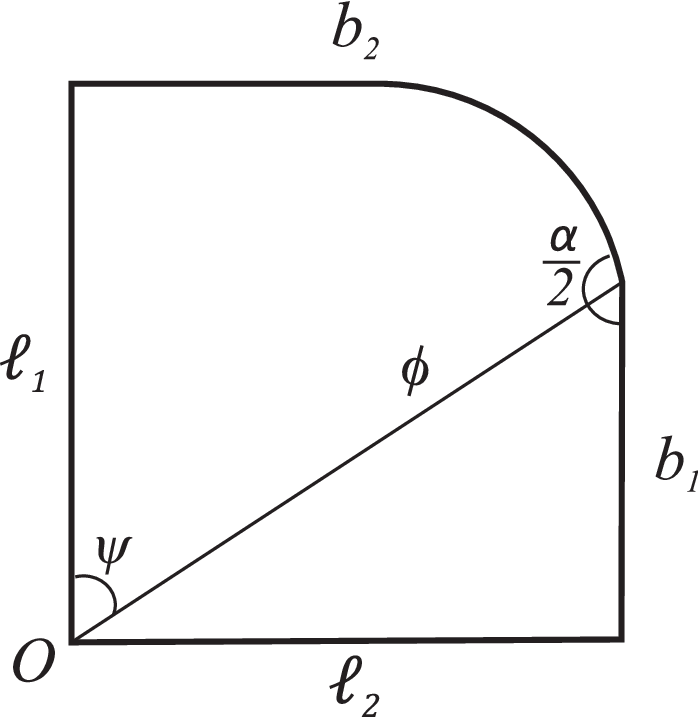}
\end{center}
\caption{The quadrangle $Q$} \label{quadr}
\end{figure}

Given a quadrangle $Q$ from Lemma \ref{lemma:quadrangle} (so-called Saccheri's quadrangle) one can construct another one, depicted in Fig.\ref{quadrlarge}, by reflecting $Q$ in its sides incident to the vertex $O$. We may regard $O$ to be the point $(0,0) \in \mathbb{S}^2$. Thus, the fibres over the corresponding vertices are
\begin{equation*}
F_a(t) = M(\psi, \phi)\,F(t),
\end{equation*}
\begin{equation*}
F_b(t) = M(\pi-\psi, \phi)\,F(t),
\end{equation*}
\begin{equation*}
F_c(t) = M(\pi+\psi, \phi)\,F(t),
\end{equation*}
\begin{equation*}
F_d(t) = M(2\pi-\psi, \phi)\,F(t).
\end{equation*}

\begin{figure}[ht]
\begin{center}
\includegraphics* [totalheight=6cm]{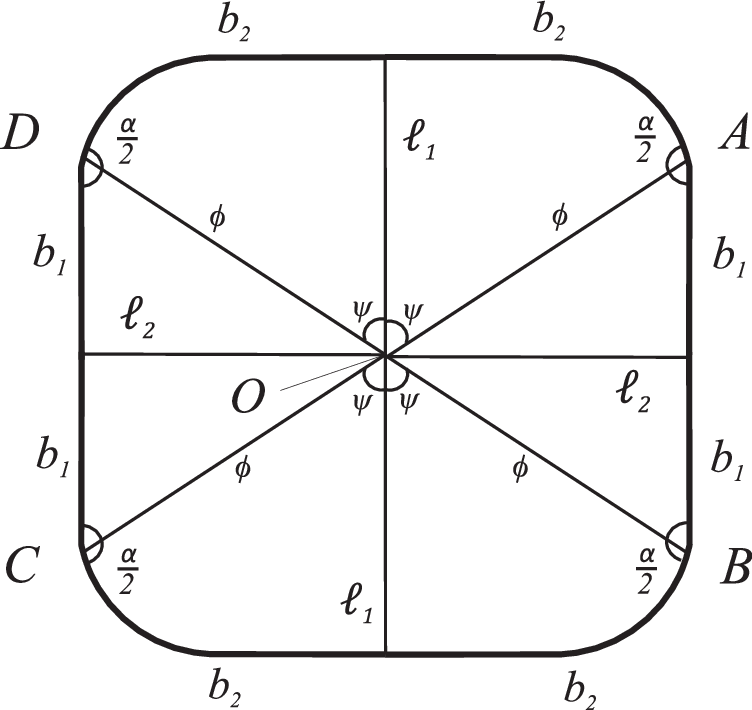}
\end{center}
\caption{The base quadrangle $P$ for $\mathcal{H}_4(\alpha)$} \label{quadrlarge}
\end{figure}

Let $A = \langle A_l, A_r \rangle$, $B = \langle B_l, B_r \rangle$, $C = \langle C_l, C_r \rangle$, $D = \langle D_l, D_r \rangle$ denote the respective rotations through angle $\alpha$ about the axis $F_a$, $F_b$, $F_c$ and $F_d$. From Lemma \ref{lemma:rotation}, one obtains
\begin{equation*}
A_l = \overline{M(\psi, \phi)}\,R(\alpha)\,M(\psi, \phi)^t,\,A_r = R(\alpha);
\end{equation*}
\begin{equation*}
B_l = \overline{M(\pi-\psi, \phi)}\,R(\alpha)\,M(\pi-\psi, \phi)^t,\,B_r = R(\alpha);
\end{equation*}
\begin{equation*}
C_l = \overline{M(\pi+\psi, \phi)}\,R(\alpha)\,M(\pi+\psi, \phi)^t,\,B_r = R(\alpha);
\end{equation*}
\begin{equation*}
D_l = \overline{M(2\pi-\psi, \phi)}\,R(\alpha)\,M(2\pi-\psi, \phi)^t,\,D_r = R(\alpha).
\end{equation*}
We assume that $\ell_1$, $\ell_2$, $\phi$ and $\psi$ satisfy the identities of Lemma \ref{lemma:quadrangle}. 

The fundamental group of $\pi_1(\mathbb{S}^3\setminus \mathcal{H}_4)$ has the presentation
\begin{equation*}
\Gamma = \pi_1(\mathbb{S}^3\setminus \mathcal{H}_4) = \langle a,b,c,d,h | adcb = badc = cbad = dcba = h, h\in Z(\Gamma) \rangle.
\end{equation*}

Let us construct a lift of the holonomy map $\widetilde{\rho}: \Gamma \rightarrow SU_2(\mathbb{C})\times SU_2(\mathbb{C})$ as follows:
\begin{equation*}
\widetilde{\rho}(a) = A,\,\widetilde{\rho}(b) = B,\,\widetilde{\rho}(c) = C,\,\widetilde{\rho}(d) = D.
\end{equation*}
Here we choose $\widetilde{\rho}: \Gamma \rightarrow SU_2(\mathbb{C})\times \mathbb{S}^1$ by the same reason as in Theorem~\ref{theorem:rigidcase}.

In order to show that the map $\widetilde{\rho}$ is a homomorphism, one has to check whether the following relations are satisfied:
\begin{equation*}
A_lD_lC_lB_l = B_lA_lD_lC_l = C_lB_lA_lD_l = D_lC_lB_lA_l,
\end{equation*}
\begin{equation*}
A_rD_rC_rB_r = B_rA_rD_rC_r = C_rB_rA_rD_r = D_rC_rB_rA_r.
\end{equation*}

The latter relations hold in view of the fact that the matrices $A_r$, $B_r$, $C_r$ and $D_r$ pairwise commute. Then, we show that the following equality holds:
\begin{equation*}
A_lD_lC_lB_l = \mathrm{id}.
\end{equation*}

\begin{figure}[t]
\begin{center}
\includegraphics* [totalheight=4.5cm]{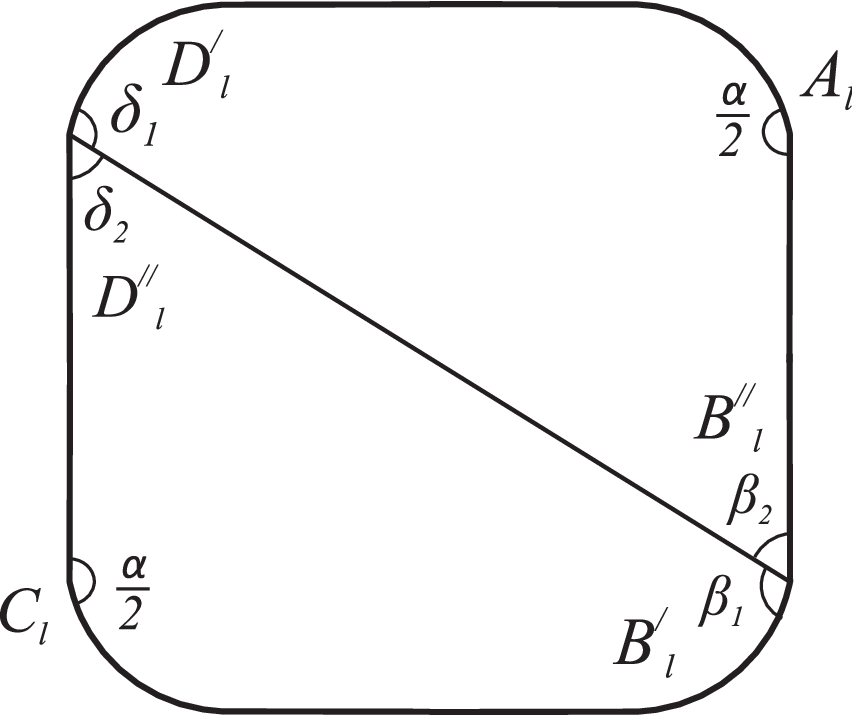}
\end{center}
\caption{Section of P by the line joining vertices $B$ and $D$} \label{quadrsection}
\end{figure}

To do this, split the quadrangle $P$ into two triangles by drawing a geodesic line from $B$ to $D$. Since $A_l$, $B_l$, $C_l$ and $D_l$ are rotations about the vertices of the quadrangle depicted in Fig. \ref{quadrsection}, let us decompose the rotations $B_l = B'_l B''_l$ and $D_l = D'_l D''_l$ into the products of rotations $B'_l$, $B''_l$ through angles $\beta_1$, $\beta_2$ and the rotations $D'_l$, $D''_l$ through angles $\delta_1$ and $\delta_2$, respectively. The following equalities hold: $\beta_1 + \beta_2 = \frac{\alpha}{2}$ and $\delta_1 + \delta_2 = \frac{\alpha}{2}$. Thus, the triples $D''_l$, $C_l$, $B'_l$ and $A_l$, $D'_l$, $B''_l$ consist of rotations about the vertices of two disjoint triangles depicted in Fig. \ref{quadrsection}. Similar to the computation of Lemma \ref{lemma:geometryofH_3}, we have
\begin{equation*}
D''_l C_l B'_l = - \mathrm{id}
\end{equation*}
and
\begin{equation*}
A_l D'_l B''_l = - \mathrm{id}.
\end{equation*}
From the identities above, it follows that
\begin{equation*}
A_l D_l C_l B_l = A_l D'_l D''_l C_l B'_l B''_l = - A_l D'_l B''_l = \mathrm{id}.
\end{equation*}
The statement holds under a cyclic permutation of the factors. Thus,
\begin{equation*}
A_lD_lC_lB_l = B_lA_lD_lC_l = C_lB_lA_lD_l = D_lC_lB_lA_l = \mathrm{id}.
\end{equation*}

Below we shall consider the side-length $\ell_1$ as a parameter. Let $\ell_1 := \tau$.  Then by Lemma \ref{lemma:quadrangle} one has that $\sin\ell_2 = -\frac{\cos\frac{\alpha}{2}}{\sin\tau}$ and $\ell_2 := \ell_2(\tau)$ is a well-defined continuous function of $\tau$. The quadrangle $P$ depends on the parameter $\tau$ continuously while keeping the angles in its vertices equal to~$\frac{\alpha}{2}$.

Let $\mathcal{H}_4(\alpha; \tau)$ denote a three-dimensional cone-manifold with underlying space the sphere $\mathbb{S}^3$ and singular locus the link $\mathcal{H}_4$ with cone angle $\alpha$ along its components. Furthermore, its holonomy map is determined by the quadrangle $P$ described above (see Fig.~\ref{quadrlarge}) depending on the parameter $\tau$. This means that the double of $P$ forms a ``pillowcase'' cone-surface with all cone angles equal to $\alpha$, which is the base space for the fibred cone-manifold $\mathcal{H}_4(\alpha; \tau)$.

Let $\mathbb{L}_n(\alpha, \beta)$ be a cone-manifold with underlying space the sphere $\mathbb{S}^3$ and singular locus a torus link of the type $(2, 2n)$ with cone angles $\alpha$ and $\beta$ along its components. Torus links of the type $(2, 2n)$ are two-bridge links. The corresponding cone-manifolds were previously considered in \cite{KolpakovMednykh, Porti2004}. Since the cone-manifold $\mathcal{H}_4(\alpha)$ forms a 4-fold branched covering of the cone-manifold $\mathbb{L}_4(\alpha, \frac{\pi}{2})$, from \cite[Theorem~2]{KolpakovMednykh} we obtain that $\mathcal{H}_4(\alpha)$ has a spherical structure if $\pi < \alpha < 2\pi$. The length of each singular stratum equals to $\ell = 2(\alpha - \pi)$ and the volume is ${\rm Vol}\,\mathcal{H}_4(\alpha) = 2 (\alpha - \pi)^2$.

Under the assumption that $\ell_1 = \ell_2$, the base quadrangle depicted in Fig.~\ref{quadrlarge} appears to have a four order symmetry. Moreover, by making use of Lemma \ref{lemma:quadrangle}, one may derive the following equalities: $\psi = \frac{\pi}{4}$, $\cos\phi = \cot\frac{\alpha}{4}$. The general formulas for the holonomy of $\mathcal{H}_4(\alpha)$ cone-manifold derived above subject to the condition $\ell_1 = \ell_2$ (equivalently, the cone-manifold $\mathcal{H}_4(\alpha)$ has a four order symmetry) give the holonomy map induced by the covering. Thus $\mathcal{H}_4(\alpha) \cong \mathcal{H}_4(\alpha; \arccos(\sqrt{2}\cos\frac{\alpha}{4}))$ is a spherical cone-manifold. 

We claim that one can vary the parameter $\tau$ in certain ranges while keeping spherical structure on $\mathcal{H}_4(\alpha; \tau)$ non-degenerate. 

\begin{lemma}\label{lemma:nondegenerateH_4}
If $\tau$ varies over $(\frac{\alpha-\pi}{2}, \frac{\pi}{2})$, the cone-manifold $\mathcal{H}_4(\alpha; \tau)$ has a non-degenerate spherical structure.
\end{lemma}
\begin{proof}
The proof has much in common with the proof of the spherical structure existence on $\mathcal{H}_3(\alpha, \beta, \gamma)$ cone-manifold given in Theorem \ref{theorem:rigidcase}. Let us express the identities of Lemma \ref{lemma:quadrangle} in terms of the parameter $\ell_1 := \tau$. We obtain
\begin{equation*}
\cos\phi = \cos\tau \sqrt{1 - \cot^2\frac{\alpha}{2} \cot^2\tau},
\end{equation*}
\begin{equation*}
\cos\psi = \sqrt{\frac{1-\cot^2\frac{\alpha}{2}\cot^2\tau}{1+\cot^2\frac{\alpha}{2}\cot^4\tau}},
\end{equation*}
\begin{equation*}
\sin\ell_2 = - \frac{\cos\frac{\alpha}{2}}{\sin\tau}.
\end{equation*}

Since Lemma \ref{lemma:quadrangle} states that $0 \leq \phi,\,\psi,\,\ell_2 \leq \frac{\pi}{2}$, the functions $\phi := \phi(\tau)$, $\psi := \psi(\tau)$, $\ell_2 := \ell_2(\tau)$ are well-defined and depend continuously on $\tau$.

Moreover, the following relations hold:
\begin{equation*}
\cos b_1 = \frac{\cos\phi}{\cos\ell_2} = \cos\tau \sqrt{\frac{\sin^2\tau - \cot^2\frac{\alpha}{2}\cos^2\tau}{\sin^2\tau - \cos^2\frac{\alpha}{2}}},
\end{equation*}
\begin{equation*}
\cos b_2 = \frac{\cos\phi}{\cos\tau} = \sqrt{1 - \cot^2\frac{\alpha}{2}\cot^2\tau}.
\end{equation*}
If one sets the centre $O$ of the quadrangle $P$ to $(0, 0) \in \mathbb{S}^2$, the whole quadrangle is situated in the upper hemisphere provided $\phi < \frac{\pi}{2}$. From the fact that $\cos b_1 \geq \cos\phi$ and $\cos b_2 \geq \cos\phi$, it follows $b_1,\,b_2 \leq \phi$. Thus $b_1,\,b_2 \leq \frac{\pi}{2}$ and the functions $b_1 := b_1(\tau)$, $b_2 := b_2(\tau)$ are well-defined and continuous with respect to $\tau$.

Observe that if the condition $\frac{\alpha - \pi}{2} < \tau < \frac{\pi}{2}$ is satisfied, then the required inequality $\phi < \frac{\pi}{2}$ holds.

Let $\mathcal{S^\ast_\alpha}$ denote the subset of $\mathcal{S}_\alpha = \{\tau | \frac{\alpha-\pi}{2} < \tau < \frac{\pi}{2}\}$ that consists of the points $\tau \in \mathcal{S}_\alpha$ such that the cone-manifold $\mathcal{H}_4(\alpha; \tau)$ has a non-degenerate spherical structure. We show $\mathcal{S}^\ast_\alpha = \mathcal{S}_\alpha$ by means of the fact that $\mathcal{S}^\ast_\alpha$ is both open and closed non-empty subset of $\mathcal{S}_\alpha$.

As noticed above, $\tau = \arccos(\sqrt{2}\cos\frac{\alpha}{4})$ belongs to $\mathcal{S}^\ast_\alpha$. Hence the set $\mathcal{S}^\ast_\alpha$ is non-empty.

The set $\mathcal{S}^\ast_\alpha$ is open by the fact that a deformation of the holonomy implies a deformation of the structure \cite{Porti1998}. To prove that $\mathcal{S}^\ast_\alpha$ is closed, consider a sequence $\tau_n$ converging in $\mathcal{S}^\ast_\alpha$ to $\tau_\infty \in \mathcal{S}_\alpha$.

The lengths of common perpendiculars between the axis of rotations $A$, $B$, $C$ and $D$ defined above equal respectively $b_1$, $b_2$ and $\phi$.

Since $\tau_{\infty}$ corresponds to a non-degenerated quadrangle, every cone-manifold $\mathcal{H}_4(\alpha; \tau_n)$ has the quantities $b_1(\tau_n)$, $b_2(\tau_n)$ and $\phi(\tau_n)$ uniformly bounded below away from zero. By the arguments similar to those of Theorem \ref{theorem:rigidcase}, we obtain that $\mathcal{H}_4(\alpha; \tau_\infty)$ is a non-degenerate spherical cone-manifold. Thus $\tau_\infty$ belongs to $\mathcal{S}^\ast_\alpha$. Hence $\mathcal{S}^\ast_\alpha$ is closed.

Finally, we obtain that $\mathcal{S}^\ast_\alpha = \mathcal{S}_\alpha$. Thus, while $\tau$ varies over $(\frac{\alpha-\pi}{2}, \frac{\pi}{2})$ the cone-manifold $\mathcal{H}_4(\alpha; \tau)$ does not collapse.
\end{proof}

The following lemma shows that the interval $(\frac{\alpha-\pi}{2}, \frac{\pi}{2})$ represents a part of the deformation space for possible spherical structures on $\mathcal{H}_4(\alpha; \tau)$.

\begin{lemma}\label{lemma:deformationspaceofH_4}
The cone-manifolds $\mathcal{H}_4(\alpha; \tau_1)$ and $\mathcal{H}_4(\alpha; \tau_2)$ with $\pi < \alpha < 2\pi$ and $\frac{\alpha-\pi}{2} < \tau_1, \tau_2 < \frac{\pi}{2}$ are not isometric if $\tau_1 \neq \tau_2$.
\end{lemma}
\begin{proof}
If the cone-manifolds $\mathcal{H}_4(\alpha; \tau_1)$ and $\mathcal{H}_4(\alpha; \tau_2)$ were isometric, then their holonomy maps $\widetilde{\rho}_i$, $i=1, 2$ would be conjugated representations of $\Gamma = \pi_1(\mathbb{S}^3\setminus \mathcal{H}_4)$ into $SU_2(\mathbb{C})\times SU_2(\mathbb{C})$. Then the mutual distances between the axis of rotations $A_i$, $B_i$, $C_i$ and $D_i$, $i=1, 2$, coming from the holonomy maps $\widetilde{\rho}_1$ and $\widetilde{\rho}_2$ would be equal for the corresponding pairs. From Lemma~\ref{lemma:equidistant}, it follows that the common perpendicular length for the given fibres $C_1$ and $C_2$ is half the distance between the images of $C_1$ and $C_2$ under the Hopf map. By applying Lemmas~\ref{lemma:equidistant} and \ref{lemma:nondegenerateH_4} to the base quadrangle $P$ of $\mathcal{H}_4(\alpha; \tau_i)$, $i=1, 2$ one makes sure that the inequality $\tau_1 \neq \tau_2$ implies the inequality for the lengths of corresponding common perpendiculars.
\end{proof}

Note, that by the Schl\"{a}fli formula the volume of $\mathcal{H}_4(\alpha)$ remains the same under any deformation preserving cone angles. Then the formulas for the volume and the singular stratum length follow from the covering properties of $\mathcal{H}_4(\alpha) \stackrel{4:1}{\rightarrow} \mathbb{L}_4(\alpha, \frac{\pi}{2})$ and Theorem~2 of \cite{KolpakovMednykh}. Thus, Theorem \ref{theorem:nonrigidcase} is proven.
\end{proof}

\flushleft{\textit{
Alexander Kolpakov \\
Department of Mathematics \\
University of Fribourg\\
chemin du Mus\'{e}e 23\\
CH-1700 Fribourg, Switzerland\\
}
\rm{kolpakov.alexander@gmail.com}

\end{document}